\documentclass[12pt, reqno]{amsart}
\usepackage{amsmath, amsthm, amscd, amsfonts, amssymb, graphicx, color}
\usepackage[bookmarksnumbered, colorlinks, plainpages]{hyperref}
\usepackage{tikz-cd}
\hypersetup{colorlinks=true,linkcolor=red, anchorcolor=green, citecolor=cyan, urlcolor=red, filecolor=magenta, pdftoolbar=true}

\textheight 22.5truecm \textwidth 14.5truecm
\setlength{\oddsidemargin}{0.35in}\setlength{\evensidemargin}{0.35in}

\setlength{\topmargin}{-.5cm}

\newtheorem{theorem}{Theorem}[section]
\newtheorem{lemma}[theorem]{Lemma}
\newtheorem{proposition}[theorem]{Proposition}
\newtheorem{corollary}[theorem]{Corollary}
\theoremstyle{definition}
\newtheorem{definition}[theorem]{Definition}

\theoremstyle{remark}
\newtheorem{remark}[theorem]{Remark}
\numberwithin{equation}{section}
\newtheorem{notation}[theorem]{Notation}

\def\C{\mathbb{C}}
\def\N{\mathbb{N}}

\def\cH{\mathcal{H}}
\def\cA{\mathcal{A}}

\def\cB{\mathcal{B}}
\def\cU{\mathcal{U}}

\def\eme{\mathcal{M}}

\def\cT{\mathcal{T}}

\def\cI{\mathcal{I}}

\providecommand{\abs}[1]{\lvert#1\rvert}
\providecommand{\Abs}[1]{\Big\lvert#1\Big\rvert}
\providecommand{\norm}[1]{\lVert#1\rVert}

\begin{document}

\setcounter{page}{1}

\title[Ultrapowers and multipliers of a $C^{\ast}$-algebra ]{An ultrapower construction of the multiplier algebra of a $C^{\ast}$-algebra 
and an application to boundary amenability of groups}

\author[F. Poggi, R. Sasyk]{Facundo Poggi $^1$, \MakeLowercase{and} Rom\'an Sasyk $^{1,2}$}

\address{$^{1}$Departamento de Matem\'atica, Facultad de Ciencias Exactas y Naturales, Universidad de Buenos Aires, Argentina.}

\address{$^{2}$Instituto Argentino de Matem\'aticas-CONICET
Saavedra 15, Piso 3 (1083), Buenos Aires, Argentina;
}
\email{\textcolor[rgb]{0.00,0.00,0.84}{fpoggi@dm.uba.ar}}

\email{\textcolor[rgb]{0.00,0.00,0.84}{rsasyk@dm.uba.ar}}

\subjclass[2010]{Primary 46L05; Secondary 03C20, 20F65.}

\keywords{Multiplier algebra, ultraproduct of $C^{\ast}$-algebras, boundary amenable groups}

\begin{abstract}
Using ultrapowers of $C^{\ast}$-algebras we provide a new construction of the multiplier algebra of a $C^{\ast}$-algebra. 
This extends the work of Avsec and Goldbring [Houston J. Math., to appear, arXiv:1610.09276.] 
to the setting of noncommutative and nonseparable $C^{\ast}$-algebras.
We also extend their work
 to give a new proof of the fact that groups that act transitively on locally finite trees with boundary amenable stabilizers are 
boundary amenable.
\end{abstract} \maketitle

\section{introduction}

The multiplier algebra $\eme(\cA)$ of a $C^{\ast}$-algebra $\cA$ is a $C^{\ast}$-algebra that contains $\cA$ as an essential ideal and satisfies the following universal property: for every $C^{\ast}$-algebra $\cB$ containing $\cA$ as an ideal, there exists a unique $\ast$-homomorphism $\varphi: \cB \to\eme(\cA)$ such that $\varphi$ is the identity on $\cA$.
If $\cA$ is abelian, thus of the form $C_{0}(X)$ for some locally compact Hausdorff space $X$,  then $\eme(\cA)$ is isomorphic to $C_{b}(X)$ and this in turn can be identified with $C(\beta X)$, where $\beta X$ is the Stone-\u{C}ech compactification of $X$ (for more about multiplier algebras, see, for instance \cite{MR0225175, MR3839621}).
 
In the article \cite{2016arXiv161009276A}, Avsec and Goldbring provided a new construction of the multiplier algebra of the abelian $C^{\ast}$-algebra $C_{0}(X)$ using ultraproducts of $C^{\ast}$-algebras, in the case  when $X$ is a second countable locally compact Hausdorff space. From there, they inferred a new construction of the Stone-\u{C}ech compactification of $X$, and they used it to give a new proof of the fact that groups that act properly and transitively on trees are boundary amenable.
In section \ref{sec2} of this note, we extend their work providing a construction of the multiplier algebra of any $C^{\ast}$-algebra $\cA$ by means of ultraproducts of $C^{\ast}$-algebras. 
In section \ref{sec3}, we focus on the case of commutative and separable $C^{\ast}$-algebras, and compare our main technical tool with the main technical tool used in \cite{2016arXiv161009276A} to explain why the work done here is indeed a generalization of \cite{2016arXiv161009276A}.
Finally, in section \ref{sec4}, we extend the techniques of \cite{2016arXiv161009276A} to show that groups that act transitively on
 locally finite trees having boundary amenable stabilizers are boundary amenable. 

\section{Ultraproducts and Multipliers}\label{sec2}

Let $\mathcal I$ be a set. An {\it ultrafilter} over $\cI$ is a nonempty collection $\cU$ of subsets of $\cI$ with the following properties:
\begin{enumerate}
\item finite intersection property: for every $\cI_{0},\cI_{1}\in\cU$, then $\cI_{0}\cap \cI_{1} \in\cU$; 
\item  directness: for every $\cI_{o} \subset \cI_{1}$, where $\cI_{o}$ belongs to $\cU$, then $\cI_{1}\in\cU$;
\item maximality: for every $\cI_{0}\subset \cI$, either $\cI_{0} \in \cU$ or $\cI\setminus \cI_{0} \in \cU$.
\end{enumerate}

An ultrafilter is {\it principal} if there exists $i\in \cI$ such that the subsets of $\cI$ that contains $i$ are in the ultrafilter. Ultrafilters not of this form are called {\it nonprincipal} or {\it free}. It is easy to show that an ultrafilter is nonprincipal exactly when it contains no finite sets. An ultrafilter is {\it cofinal} when the index set is a directed set, and the sets $\{i\in \cI: i\geq i_{0}\} $ are in $\cU$ for every $i_{0}\in \cI$.

When dealing with directed sets with the property that there is no maximal element, every cofinal ultrafilter is nonprincipal. Moreover, when the ultrafilter is over $\N$, being cofinal is the same as being nonprincipal. If a directed set has a maximal element, then every cofinal ultrafilter is principal.

\begin{definition}\label{convergenciaultrafiltro}
Let $\cU$ be an ultrafilter over $\cI$. Let $(X,d)$ be a metric space and let $(a_{i})_{i\in \cI}\subset X$. We say that $(a_{i})_{i\in\cI}$ 
is convergent along $\cU$, if there exists an element $a\in X$ such that, for every $\varepsilon>0$, the set $\{i\in\cI: d(a_{i},a)<\varepsilon\}$ belongs to $\cU$. The element $a$ is called the $\cU$-limit of $(a_{i})_{i\in\cI}$ and it is denoted by $\displaystyle\lim_{\cU}{a_{i}}$.
\end{definition}

 \subsection{Ultraproducts of $C^{\ast}$-algebras}
 
Let $\cU$ be an ultrafilter over a set $\cI$ and let $\mathcal{A}$ be a $C^{\ast}$-algebra. Denote by $\prod_{\cI}{\mathcal{A}}$  the set $\{(a_{i})_{i\in \cI}: \sup_{i\in \cI}{\norm{a_{i}}}<\infty\}$ and let $\mathcal{N}_{\cU}$ be the subspace generated by those $(a_{i})_{i\in\cI}\in\prod_{\cI}{\mathcal{A}}$ such that $\displaystyle\lim_{\cU}{\norm{a_{i}}} =0$. Denote by $\mathcal{A}^{\cU}$ the quotient $\prod_{\cI}{\mathcal{A}}  / \mathcal{N}_{\cU}$. This is a vector space, and with the norm defined by $\norm{(a_{i})_{i\in\cI}}_{\cU}:= \displaystyle\lim_{\cU}{\norm{a_{i}}}$, and the involution defined by $(a_{i})_{i\in\cI}^{*} := (a_{i}^{*})_{i\in\cI}$, so $\cA^{\cU}$ becomes a $C^{\ast}$-algebra.

\begin{remark} \label{conjuntosomega} 
Let $((a_{i}^{n})_{i\in\cI})_{n\in\mathbb N}\subset\mathcal{A}^{\cU}$ be a sequence that converges to $(a_{i})_{i\in\cI}\in\mathcal{A}^{\cU}$. Then, for all $\varepsilon>0$, there exists $n_{0}\in\N$ such that if $n\geq n_{0}$, then ${\norm{(a^{n}_{i})_{i\in\cI}-(a_{i})_{i\in\cI}}}_{\cU}<\varepsilon$. 
 We claim that if $n\geq n_{0}$, then the set 
  $\Omega_{n}(\varepsilon) := \{i\in \cI: \norm{a_{i}^{n} - a_{i}}<\varepsilon\}$ belongs to $\cU.$
To show this, set $\alpha_{n} := {\norm{(a^{n}_{i})_{i\in\cI}-(a_{i})_{i\in\cI}}}_{\cU}$.
For every $\delta>0$, we have $\{i\in \cI:\abs{\norm{a_{i}^{n}-a_{i}} -\alpha_{n}}<\delta\}\in\cU$. 
Taking $\delta = \varepsilon -\alpha_{n}>0$ we get  $\varepsilon -\alpha_{n}> \abs{\norm{a_{i}^{n}-a_{i}} - \alpha_{n}} \geq \norm{a_{i}^{n}-a_{i}} - \alpha_{n}$. It follows that $ \{i\in \cI:\abs{\norm{a_{i}^{n}-a_{i}} -\alpha_{n}}<\delta\}\subset   \Omega_{n}(\varepsilon)$.
By directness, this implies that   $\Omega_{n}(\varepsilon)\in \cU$. 
\end{remark}

From now on we fix a faithful and non-degenerate representation of $\cA$ on $B(\mathcal{H})$. 

\begin{lemma}\label{existenciauwot}
 Let $\cU$ be an ultrafilter defined over $\cI$ and let $\cA$ be a $C^{\ast}$-algebra. For each $(a_{i})_{i\in\cI}\in\prod_{\cI}{\mathcal{A}}$, there exists a unique element $a_{\cU\text{-}WOT} \in B(\mathcal{H})$ 
  such that for every $\xi,\eta\in\mathcal{H}$, it holds that $\langle a_{\cU\text{-}WOT} \xi,\eta\rangle = \displaystyle\lim_{\cU}{\langle a_{i}\xi,\eta\rangle}$.
 The operator $a_{\cU\text{-}WOT}$ is called the $\cU$-WOT-limit of $(a_{i})_{i\in\cI}$. 

\end{lemma}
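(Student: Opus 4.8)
The plan is to produce $a_{\cU\text{-}WOT}$ as the bounded operator associated to an explicitly defined bounded sesquilinear form on $\cH$.

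First I would record the one-variable fact underlying everything: every bounded family of complex numbers converges along any ultrafilter. Indeed, if $(\lambda_i)_{i\in\cI}\subset\C$ satisfies $\sup_{i\in\cI}\abs{\lambda_i}\le c$, then all the $\lambda_i$ lie in the closed ball $\overline{B(0,c)}$, which is a compact metric space, and in a compact metric space every family indexed by an ultrafilter converges to a unique limit in the sense of Definition \ref{convergenciaultrafiltro} (if no point were a $\cU$-limit, one could cover $\overline{B(0,c)}$ by finitely many balls each of whose ``bad index set'' lies in $\cU$ by maximality, and the finite intersection property would force $\emptyset\in\cU$, a contradiction); moreover $\abs{\lim_{\cU}\lambda_i}\le c$. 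I would also note that $\lim_{\cU}$ is additive, commutes with multiplication by scalars, and commutes with complex conjugation, all immediate from the definition together with the finite intersection property.

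Next, set $M:=\sup_{i\in\cI}\norm{a_i}<\infty$ and, for $\xi,\eta\in\cH$, define $B(\xi,\eta):=\lim_{\cU}\langle a_i\xi,\eta\rangle$; this is well defined by the previous paragraph, since $\abs{\langle a_i\xi,\eta\rangle}\le M\norm{\xi}\norm{\eta}$ for all $i$. Using the linearity properties of the ultrafilter limit together with the (conjugate-)linearity of each inner product $\langle a_i\,\cdot\,,\,\cdot\,\rangle$, the form $B$ is linear in the first variable, conjugate-linear in the second, and satisfies $\abs{B(\xi,\eta)}\le M\norm{\xi}\norm{\eta}$. Hence $B$ is a bounded sesquilinear form, and by the standard correspondence between bounded sesquilinear forms and bounded operators on a Hilbert space there is a unique $T\in B(\cH)$ with $\langle T\xi,\eta\rangle=B(\xi,\eta)$ for all $\xi,\eta\in\cH$; we put $a_{\cU\text{-}WOT}:=T$.

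Finally, uniqueness follows because an operator on $\cH$ is determined by its matrix coefficients: if $S,T\in B(\cH)$ both satisfy $\langle S\xi,\eta\rangle=\langle T\xi,\eta\rangle=\lim_{\cU}\langle a_i\xi,\eta\rangle$ for all $\xi,\eta$, then $\langle(S-T)\xi,\eta\rangle=0$ for all $\xi,\eta$, whence $S=T$. I do not expect a genuine obstacle here; the only point that needs care is the existence of scalar ultrafilter limits of bounded families and the verification that $\lim_{\cU}$ respects the algebraic operations, after which the statement is a routine application of the sesquilinear-form representation theorem.
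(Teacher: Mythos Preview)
Your proof is correct and follows essentially the same approach as the paper: define the scalar ultrafilter limits $b_{\xi,\eta}=\lim_{\cU}\langle a_i\xi,\eta\rangle$, observe this is a bounded sesquilinear form, and invoke the representing-operator theorem. You simply fill in more detail (existence of scalar $\cU$-limits via compactness, the algebraic properties of $\lim_{\cU}$, and an explicit uniqueness argument) where the paper says ``it is easy to see''.
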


\begin{proof}
Let $(a_{i})_{i\in\cI}\in\prod_{\cI}{\mathcal{A}}$ and let $\xi,\eta \in\cH$. Then $(\langle a_{i}\xi,\eta \rangle)_{i\in\cI}\subset \C$ is bounded, hence it has a $\cU$-limit, which is denoted by  $b_{\xi,\eta}$. It is easy to see that the map $(\xi,\eta) \mapsto b_{\xi,\eta}$ is a bounded sesquilinear form on $\cH\times \cH$.
 Take $a_{\cU\text{-}WOT}\in B(\mathcal{H})$ the unique operator associated to it. 
\end{proof}

\begin{definition}
Let $\cU$ be an ultrafilter defined over $\cI$ and let $\cA$ be a $C^{\ast}$-algebra. An element $(a_{i})_{i\in\cI}\in\prod_{\cI}{\cA}$ is $\cU$-{\it strict} convergent if there exists an operator $a_{\cU}\in B(\cH)$ such that, for every $x\in \cA$, and every $\varepsilon>0$ we have $\{i\in \cI:\norm{a_{i}x-a_{\cU}x}<\varepsilon,\norm{xa_{i}-xa_{\cU}}<\varepsilon\}\in\cU$. The operator $a_{\cU}$ is called the $\cU$-{\it strict limit} of $(a_{i})_{i\in \cI}$. Observe that $a_{\cU}x$ and $xa_{\cU}$ are elements of $\cA$ for every $x\in\cA$.
\end{definition}

In what follows, it will be convenient to have the following notation at hand.

\begin{notation}\label{notacion}
Let $(a_{i})_{i\in \cI},(b_{i})_{i\in \cI}\in\displaystyle\Pi_{\cI}\cA$ that are $\cU$-strict convergent to $a_{\cU}$ and $b_{\cU}$, respectively. For every $x\in\cA$ and every $\varepsilon>0$,  put
$$A_{x}(\varepsilon) := \{i\in \cI:\norm{x(a_{i}-a_{\cU})},\norm{(a_{i}-a_{\cU})x}< \varepsilon\}\in \cU,$$
$$B_{x}(\varepsilon) := \{i\in \cI:\norm{x(b_{i}-b_{\cU})},\norm{(b_{i}-b_{\cU})x}< \varepsilon\}\in \cU.$$
\end{notation}

\begin{proposition}
Let $\cU$ be an  ultrafilter defined over $\cI$ and let $\cA$ be a $C^{\ast}$-algebra. If $(a_{i})_{i\in\cI},(b_{i})_{i\in\cI}\in\displaystyle\Pi_{\cI}\cA$ define the same element in $\cA^{\cU}$, then
\begin{enumerate}
\item if $(a_{i})_{i\in\cI}$ is $\cU$-WOT convergent to $a_{\cU\text{-}WOT}$, then $(b_{i})_{i\in\cI}$ is $\cU$\it{-WOT} convergent to $a_{\cU\text{-}WOT}$;\label{wduwotconv}
\item if $(a_{i})_{i\in\cI}$ is $\cU$-strict convergent to $a_{\cU}$, then $(b_{i})_{i\in\cI}$ is $\cU$\it{-strict} convergent to $a_{\cU}$.\label{wdustconv}
\end{enumerate}
\end{proposition}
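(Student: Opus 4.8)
The plan is to deduce both items directly from the single hypothesis $\lim_{\cU}\norm{a_i-b_i}=0$ (which is exactly what ``defining the same element in $\cA^{\cU}$'' means), together with two elementary facts about $\cU$-limits of bounded scalar sequences: the $\cU$-limit is linear, and any scalar sequence dominated in modulus by a $\cU$-null sequence is itself $\cU$-null.

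For part \textup{(\ref{wduwotconv})}, fix $\xi,\eta\in\cH$. First I would note the estimate $\abs{\langle(a_i-b_i)\xi,\eta\rangle}\le\norm{a_i-b_i}\,\norm{\xi}\,\norm{\eta}$, so that $(\langle(a_i-b_i)\xi,\eta\rangle)_{i\in\cI}$ is $\cU$-null. By linearity of the scalar $\cU$-limit, $\lim_{\cU}\langle b_i\xi,\eta\rangle=\lim_{\cU}\langle a_i\xi,\eta\rangle=\langle a_{\cU\text{-}WOT}\xi,\eta\rangle$. Since $\xi$ and $\eta$ are arbitrary, the operator $a_{\cU\text{-}WOT}$ satisfies the defining property of the $\cU$-WOT-limit of $(b_i)_{i\in\cI}$; its uniqueness is precisely the content of Lemma \ref{existenciauwot}, so $(b_i)_{i\in\cI}$ is $\cU$-WOT convergent to $a_{\cU\text{-}WOT}$.

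For part \textup{(\ref{wdustconv})}, fix $x\in\cA$ and $\varepsilon>0$. The triangle inequality gives
$$\norm{x(b_i-a_{\cU})}\le\norm{x}\,\norm{b_i-a_i}+\norm{x(a_i-a_{\cU})},$$
and symmetrically $\norm{(b_i-a_{\cU})x}\le\norm{x}\,\norm{b_i-a_i}+\norm{(a_i-a_{\cU})x}$. Since $\lim_{\cU}\norm{a_i-b_i}=0$, the set $E:=\{i\in\cI:\norm{a_i-b_i}<\varepsilon/(2(\norm{x}+1))\}$ lies in $\cU$, where the harmless $+1$ disposes of the case $x=0$. Using Notation \ref{notacion} (with $a_{\cU}$ as the strict limit of $(a_i)_{i\in\cI}$), the set $A_x(\varepsilon/2)$ also lies in $\cU$, hence so does $E\cap A_x(\varepsilon/2)$ by the finite intersection property. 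On $E\cap A_x(\varepsilon/2)$ both right-hand sides above are $<\varepsilon/2+\varepsilon/2=\varepsilon$, so $E\cap A_x(\varepsilon/2)\subseteq\{i\in\cI:\norm{x(b_i-a_{\cU})},\norm{(b_i-a_{\cU})x}<\varepsilon\}$, and the latter set belongs to $\cU$ by directness. As $x$ and $\varepsilon$ were arbitrary, $(b_i)_{i\in\cI}$ is $\cU$-strict convergent to $a_{\cU}$.

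Neither part is hard; the only points needing care are stating precisely the elementary $\cU$-limit facts used in part \textup{(\ref{wduwotconv})} and keeping track of the factor $\norm{x}$ together with the $x=0$ degeneracy when splitting $\varepsilon$ in part \textup{(\ref{wdustconv})}. If an obstacle must be named at all, it is purely bookkeeping: ensuring that every auxiliary set produced is genuinely a member of $\cU$ (via finite intersections and directness) rather than merely nonempty.
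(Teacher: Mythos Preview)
Your proposal is correct and follows essentially the same approach as the paper: both parts rest on the triangle inequality together with the single hypothesis $\lim_{\cU}\norm{a_i-b_i}=0$, and in part~(\ref{wdustconv}) you intersect exactly the same two $\cU$-sets the paper does (your $E$ is the paper's $\{i:\norm{a_i-b_i}<\varepsilon/(2\norm{x})\}$, up to the cosmetic $+1$). The only differences are stylistic: in part~(\ref{wduwotconv}) you invoke linearity of scalar $\cU$-limits where the paper spells out the corresponding $\varepsilon$-argument, and in part~(\ref{wdustconv}) you are slightly more careful about the degenerate case $x=0$, which the paper leaves implicit.
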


\begin{proof}
 To prove \eqref{wduwotconv}, take $\xi,\eta\in\mathcal{H}$ of norm 1, let $\varepsilon>0$, and take
   $i$ in the set $\{i \in \cI:\norm{a_{i}-b_{i}}< \frac{\varepsilon}{2}\}\cap\{i\in\cI:\abs{\langle a_{\cU\text{-}WOT}\xi,\eta\rangle - \langle a_{i}\xi,\eta\rangle}<\frac{\varepsilon}{2}\}\in\cU$. 
  Then
\begin{equation*}
\abs{\langle a_{\cU\text{-}WOT}\xi,\eta\rangle - \langle b_{i}\xi,\eta\rangle} \leq \abs{\langle (a_{\cU\text{-}WOT} -a_{i})\xi,\eta\rangle} + \abs{\langle (a_{i}-b_{i})\xi,\eta\rangle}< \varepsilon.
 \end{equation*}
 It follows that the set $\{i\in\cI:\abs{\langle a_{\cU\text{-}WOT}\xi,\eta\rangle - \langle b_{i}\xi,\eta\rangle}<\varepsilon\}$ belongs to $\cU$.

To prove \eqref{wdustconv}, take $\varepsilon>0$, $x\in\cA$ and $i\in \{i\in \cI:\norm{a_{i}-b_{i}}< \frac{\varepsilon}{2\norm{x}}\}\cap A_{x}(\frac{\varepsilon}{2}) \in\cU$. Then
\begin{equation*}
\norm{x(b_{i}-a_{\cU})} \leq \norm{x(b_{i}-a_{i})} + \norm{x(a_{i}-a_{\cU})} < \varepsilon,
\end{equation*}
\begin{equation*}
\norm{(b_{i}-a_{\cU})x}\leq \norm{(b_{i}-a_{i})x} + \norm{(a_{i}-a_{\cU})x} < \varepsilon.
\end{equation*}
It follows that the set  $\{i\in \cI:\norm{x(b_{i}-a_{\cU})},\norm{(b_{i}-a_{\cU})x}< \varepsilon\} $ belongs to $\cU$. 
\end{proof}

\begin{proposition}\label{asucalgebra}
Let $\cU$ be an ultrafilter defined over $\cI$ and let $\cA$ be a $C^{\ast}$-algebra.   The set
$$\cA^{s \cU} :=\{(a_{i})_{i\in\cI} \in\cA^{\cU}:\text{there exists }a_{\cU}\in B(\mathcal{H}): (a_{i})_{i\in\cI} \text{ is }\cU\text{-strict convergent to } a_{\cU} \}$$
 is a $C^{\ast}$-algebra.
\end{proposition}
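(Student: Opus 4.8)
The plan is to exhibit $\cA^{s\cU}$ as a norm-closed $\ast$-subalgebra of the $C^{\ast}$-algebra $\cA^{\cU}$, whence it is itself a $C^{\ast}$-algebra. By the preceding proposition, being $\cU$-strict convergent to a given operator, as well as the value of that operator, depends only on the class in $\cA^{\cU}$, so $\cA^{s\cU}$ is a well-defined subset. The one preliminary fact I would establish first, since it is used repeatedly, is that whenever $(a_i)_{i\in\cI}$ is $\cU$-strict convergent to $a_\cU$, then $a_\cU$ coincides with the $\cU$-WOT-limit $a_{\cU\text{-}WOT}$ of Lemma \ref{existenciauwot}: for a vector $x\zeta$ with $x\in\cA$, $\zeta\in\cH$ one has $\langle a_\cU x\zeta,\eta\rangle=\lim_\cU\langle a_i x\zeta,\eta\rangle=\langle a_{\cU\text{-}WOT}x\zeta,\eta\rangle$ because $\lim_\cU\norm{a_i x-a_\cU x}=0$, and since both operators are bounded and the representation is non-degenerate, they agree on the dense linear span of $\{x\zeta:x\in\cA,\ \zeta\in\cH\}$, hence everywhere. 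In particular the $\cU$-strict limit is unique and $\norm{a_\cU}_{B(\cH)}\le\norm{(a_i)_{i\in\cI}}_\cU$.

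Closure under the algebraic operations is then a matter of intersecting finitely many members of $\cU$ coming from Notation \ref{notacion}. For sums and scalar multiples one works on $A_x(\varepsilon/2)\cap B_x(\varepsilon/2)$ and bounds $\norm{x\big((a_i+b_i)-(a_\cU+b_\cU)\big)}$ and its left analogue; for the involution one uses $\norm{a_i^{*}x-a_\cU^{*}x}=\norm{x^{*}(a_i-a_\cU)}$ and $\norm{xa_i^{*}-xa_\cU^{*}}=\norm{(a_i-a_\cU)x^{*}}$, so that $A_{x^{*}}(\varepsilon)$ already witnesses $\cU$-strict convergence of $(a_i^{*})_{i\in\cI}$ to $a_\cU^{*}$ (and $a_\cU^{*}x=(x^{*}a_\cU)^{*}\in\cA$). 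For products I would split
\[
a_ib_ix-a_\cU b_\cU x=a_i(b_i-b_\cU)x+(a_i-a_\cU)(b_\cU x),
\]
estimating the first summand by $\big(\sup_j\norm{a_j}\big)\norm{(b_i-b_\cU)x}$ (small on a suitable $B_x(\cdot)\in\cU$) and the second via $b_\cU x\in\cA$ (small on a suitable $A_{b_\cU x}(\cdot)\in\cU$); the left-hand version $xa_ib_i-xa_\cU b_\cU$ is handled symmetrically using $\sup_j\norm{b_j}$ and $xa_\cU\in\cA$. This shows $(a_ib_i)_{i\in\cI}$ is $\cU$-strict convergent to $a_\cU b_\cU$, and $a_\cU b_\cU x=a_\cU(b_\cU x)\in\cA$, $xa_\cU b_\cU=(xa_\cU)b_\cU\in\cA$.

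The substantive step is norm-closedness, and this is where I expect the main difficulty to lie. Let $(x^{n})_{n\in\N}\subset\cA^{s\cU}$ converge in $\cA^{\cU}$ to $x=(a_i)_{i\in\cI}$, say $x^{n}=(a_i^{n})_{i\in\cI}$ with $\cU$-strict limit $a_\cU^{n}$. By the additivity just proved, $(a_i^{n}-a_i^{m})_{i\in\cI}$ is $\cU$-strict convergent to $a_\cU^{n}-a_\cU^{m}$, so the preliminary fact yields $\norm{a_\cU^{n}-a_\cU^{m}}_{B(\cH)}\le\norm{x^{n}-x^{m}}_\cU$; hence $(a_\cU^{n})_{n\in\N}$ is Cauchy in $B(\cH)$ and converges in operator norm to some $a_\cU\in B(\cH)$, with $\norm{a_\cU^{n}-a_\cU}_{B(\cH)}\le\norm{x^{n}-x}_\cU$ by letting $m\to\infty$. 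To finish I would check that $(a_i)_{i\in\cI}$ is $\cU$-strict convergent to $a_\cU$: fix $c\in\cA$ and $\varepsilon>0$, and choose $n$ large enough that $\norm{x^{n}-x}_\cU<\tfrac{\varepsilon}{4(\norm{c}+1)}$ (hence also $\norm{a_\cU^{n}-a_\cU}_{B(\cH)}<\tfrac{\varepsilon}{4(\norm{c}+1)}$) and, by Remark \ref{conjuntosomega}, that $\Omega_{n}\big(\tfrac{\varepsilon}{4(\norm{c}+1)}\big)\in\cU$; then on $\Omega_{n}\big(\tfrac{\varepsilon}{4(\norm{c}+1)}\big)\cap A^{n}_{c}(\varepsilon/4)\in\cU$, where $A^{n}_{c}$ is the set of Notation \ref{notacion} attached to $(a_i^{n})_{i\in\cI}$, we get
\[
\norm{a_ic-a_\cU c}\le\norm{(a_i-a_i^{n})c}+\norm{a_i^{n}c-a_\cU^{n}c}+\norm{(a_\cU^{n}-a_\cU)c}<\varepsilon,
\]
and symmetrically $\norm{ca_i-ca_\cU}<\varepsilon$; moreover $a_\cU c$ and $ca_\cU$ lie in $\cA$, being operator-norm limits of $a_\cU^{n}c,\,ca_\cU^{n}\in\cA$. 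Thus $x\in\cA^{s\cU}$. The delicate point throughout is the coincidence of the $\cU$-strict and $\cU$-WOT limits, as it is precisely what turns control in the strict topology into the operator-norm estimate $\norm{a_\cU^{n}-a_\cU^{m}}\le\norm{x^{n}-x^{m}}_\cU$ that makes the closure argument run.
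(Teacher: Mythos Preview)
Your proof is correct, and the algebraic closure arguments (linear combinations, adjoints, products) match the paper's almost verbatim. The norm-closedness argument, however, takes a genuinely different route. You front-load the observation that the $\cU$-strict limit agrees with the $\cU$-WOT limit, obtaining the global operator-norm bound $\norm{a_\cU}_{B(\cH)}\le\norm{(a_i)_{i\in\cI}}_\cU$; applied to differences, this immediately makes the sequence $(a_\cU^{n})_{n\in\N}$ Cauchy in $B(\cH)$, and the rest is a single three-term $\varepsilon$-estimate. The paper instead works ``fiberwise'': for each fixed $x\in\cA$ it shows directly that $(a_\cU^{n}x)_{n\in\N}$ is Cauchy in $\cA$, defines $\rho(x):=\lim_n a_\cU^{n}x$, proves $\lim_\cU \alpha_i x=\rho(x)$, and only at the very end invokes the $\cU$-WOT limit $\alpha_{\cU\text{-}WOT}$ to consolidate the various $\rho(x)$ into a single operator. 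Your approach is shorter and conceptually cleaner; the paper's avoids any global operator-norm statement about the $\cU$-strict limits until the last step, at the cost of more bookkeeping.
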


\begin{proof}
Let $(a_{i})_{\in\cI},(b_{i})_{i\in\cI}\in\cA^{s\cU}$ that are $\cU$-strict convergent to $a_{\cU}$ and $b_{\cU}$, respectively, let $\lambda\neq 0$ be a complex number, and let $x\in\cA$. By following  Notation \ref{notacion}, if $i\in A_{x}\left(\displaystyle\frac{\varepsilon}{2}\right)\cap B_{x}\left(\displaystyle\frac{\varepsilon}{2\abs{\lambda}}\right)\in\cU$, then
\begin{equation*}\norm{(a_{i}+\lambda b_{i} -a_{\cU} -\lambda b_{\cU})x} \leq \norm{(a_{i}-a_{\cU})x} + \abs{\lambda}\norm{(b_{i}-b_{\cU})x} < \varepsilon,\end{equation*}
and 
\begin{equation*}\norm{x(a_{i}+\lambda b_{i} -a_{\cU} -\lambda b_{\cU})} \leq \norm{x(a_{i}-a_{\cU})} + \abs{\lambda}\norm{x(b_{i}-b_{\cU})} < \varepsilon.\end{equation*}
It follows that 
$$A_{x}\left(\frac{\varepsilon}{2}\right)\cap B_{x}\left(\frac{\varepsilon}{2\abs{\lambda}}\right)\subset \{i\in \cI:\norm{(a_{i}+\lambda b_{i} -a_{\cU} -\lambda b_{\cU})x}<\varepsilon,\norm{x(a_{i}+\lambda b_{i} -a_{\cU} -\lambda b_{\cU})}<\varepsilon \},$$ 
so $(a_{i}+\lambda b_{i})_{i\in \cI}$ is $\cU$-strict convergent to $a_{\cU}+\lambda b_{\cU}$. 

It is clear that $\cA^{s \cU}$ is $*$-closed. To show that $\cA^{s \cU}$ is closed under taking products, set $M =\sup_{i\in\cI} \{\norm{a_{i}},\norm{b_{i}}\}$ and take $i\in A_{x}\big(\displaystyle\frac{\varepsilon}{2M}\big)\cap A_{b_{\cU}x}\big(\displaystyle\frac{\varepsilon}{2}\big)\cap B_{x}\big(\displaystyle\frac{\varepsilon}{2M}\big)\cap B_{xa_{\cU} }\big(\displaystyle\frac{\varepsilon}{2}\big)\in\cU$. Then
\begin{equation*}
\norm{(a_{i}b_{i} -a_{\cU}b_{\cU})x} \leq \norm{(a_{i}b_{i} - a_{i}b_{\cU})x} + \norm{(a_{i}b_{\cU} - a_{\cU}b_{\cU})x}
\leq \varepsilon
\end{equation*}
and
\begin{equation*}
\norm{x(a_{i}b_{i} -a_{\cU}b_{\cU})} \leq \norm{x(a_{i}b_{i} - a_{\cU} b_{i})} + \norm{x(a_{\cU} b_{i} - a_{\cU}b_{\cU})} 
\leq \varepsilon,
 \end{equation*}
which means that $(a_{i}b_{i})_{i\in\cI}$ is $\cU$-strict convergent to $a_{\cU}b_{\cU}$. 

It is left to show that $\cA^{s\cU}$ is norm closed. Let $((a_{i}^{n})_{i\in\cI})_{n\in\N}$ be a sequence in $\cA^{s\cU}$ that converges  to $(\alpha_{i})_{i\in\cI}$ in $\cA^{\cU}$. We need to see that $(\alpha_{i})_{i\in\cI}$ is $\cU$-strict convergent.

As a first step, we will show that, for a fixed element $x\in\cA$, $(\alpha_{i}x)_{i\in \cI}$ and $(x\alpha_{i})_{i\in \cI}$ have $\cU$-limit in $\cA$ (in the sense of Definition \ref{convergenciaultrafiltro}).

Let $x\in\cA$ be fixed and $x\neq 0$. For each $n\in\N$ let $a^{n}_{\cU}$ be the $\cU$-strict limit of $(a_{i}^{n})_{i\in \cI}\in\cA^{s\cU}$. We proceed to show that $(a^{n}_{\cU}x)_{n\in\N}$ is a Cauchy sequence in $\cA$. 
By  Remark \ref{conjuntosomega}, for every $\varepsilon>0$, there exists  $n_{0}\in\N$ such that, for all $n\geq n_0$, the sets  
$\Omega_{n}\big(\frac{\varepsilon}{4\norm{x}}\big)$ are elements of $\cU$. 
It follows that the  set
 \begin{equation*}
\{i\in \cI:\norm{a_{i}^{n}x-a^{n}_{\cU}x}<\frac{\varepsilon}{4}\} \cap \{i\in \cI:\norm{a_{i}^{m}x-a^{m}_{\cU}x}<\frac{\varepsilon}{4}\} \cap \Omega_{n}\
\big(\frac{\varepsilon}{4\norm{x}}\big)\cap \Omega_{m}\big(\frac{\varepsilon}{4\norm{x}}\big)
\end{equation*} 
is an element of $\cU$ for all $n,m\geq n_0$.  Take $i$ in this set. Then 
\begin{align*}
\norm{a^{n}_{\cU}x-a^{m}_{\cU}x} &\leq \norm{a^{n}_{\cU}x-a_{i}^{n}x} + \norm{a_{i}^{n}x-a_{i}^{m}x}+ \norm{a_{i}^{m}x-a^{m}_{\cU}x}\\
&\leq \frac{\varepsilon}{4} + \norm{a_{i}^{n} -\alpha_{i}}\norm{x} + \norm{\alpha_{i} - a_{i}^{m}}\norm{x} +\frac{\varepsilon}{4} < \varepsilon.
\end{align*}

Let  $\rho(x):=\displaystyle\lim_{n\in\N}{a^{n}_{\cU}x}\in\cA$. 
We will show that $\displaystyle\lim_{\cU}{\alpha_{i}x}=\rho(x) $, that is,\\  $\{i\in \cI:\norm{\rho(x) - \alpha_{i}x}<\varepsilon\}\in\cU$ for each $\varepsilon>0$. 
Let $n\in\N$ large enough  such that
 \begin{equation*}
 \norm{\rho(x) - a^{n}_{\cU}x}<\frac{\varepsilon}{3}\,\,\,\,\,\, \text{ and }\,\,\,\,\,
\Omega_n\left(\frac{\varepsilon}{3\norm{x}}\right)\in\cU.
\end{equation*}
For such $n\in\N$, take
 $i\in\{i\in \cI:\norm{a^{n}_{\cU}x-a_{i}^{n}x}<\displaystyle\frac{\varepsilon}{3}\}\cap \Omega_n\big(\displaystyle\frac{\varepsilon}{3\norm{x}}\big)\in\cU$.
 Then
 \begin{equation*}
\norm{\rho(x)-\alpha_{i}x} \leq \norm{\rho(x) - a^{n}_{\cU}x} + \norm{a^{n}_{\cU}x-a_{i}^{n}x} + \norm{a_{i}^{n}x-\alpha_{i}x} \leq\varepsilon.
\end{equation*}
Repeating this with  $(x\alpha_{i})_{i\in\cI}$ concludes the first step. 

 Let $\alpha_{\cU\text{-}WOT}\in B(\mathcal{H})$ be the $\cU$-WOT-limit of $(\alpha_{i})_{i\in\cI}$.  We will show that $ \alpha_{\cU\text{-}WOT}x=\rho(x)$. Take $\eta,\xi\in\mathcal{H}$ of norm 1, $\varepsilon>0$, and
$$i\in \{i\in \cI:\abs{\langle(\alpha_{i}-\alpha_{\cU\text{-}WOT})x\xi,\eta \rangle}<\frac{\varepsilon}{2}\}\cap \{i\in \cI:\norm{\rho(x) - \alpha_{i}x}<\frac{\varepsilon}{2}\}\in\cU.$$
We then have that
$$\abs{\langle(\rho(x)-\alpha_{\cU\text{-}WOT}x)\xi,\eta \rangle} \leq \abs{\langle(\rho(x)-\alpha_{i}x)\xi,\eta\rangle} + \abs{\langle(\alpha_{i}x-\alpha_{\cU\text{-}WOT}x)\xi,\eta \rangle} <\varepsilon,
$$
which implies that $\alpha_{\cU\text{-}WOT}x=\rho(x)= \displaystyle\lim_{\cU}{\alpha_{i}x}$.
Therefore, for all $x\in\cA$ and $\varepsilon>0$,
we have $\{i\in \cI:\norm{\alpha_{\cU\text{-}WOT}x-\alpha_{i}x}<\varepsilon\}\in\cU$.
In a similar manner, one shows that $\{i\in \cI:\norm{x\alpha_{\cU\text{-}WOT}-x\alpha_{i}}<\varepsilon\}\in\cU$.
It follows that $(\alpha_{i})_{i\in\cI}$ is $\cU$-strict convergent to $\alpha_{\cU\text{-}WOT}$.
\end{proof}

\begin{proposition}
Let $\cU$ be an ultrafilter defined over $\cI$ and let $\cA$ be a $C^{\ast}$-algebra. The set 
 \begin{equation*}\label{setJ}J:=\{(a_{i})_{i\in\cI}\in\cA^{s\cU}:a_{i} \text{ is }\cU\text{-strict convergent to } 0\}\end{equation*}
  is an ideal of $\cA^{s\cU}$.  
\end{proposition}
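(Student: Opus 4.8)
The plan is to verify that $J$ is a two-sided ideal of $\cA^{s\cU}$ by checking the three required properties: that $J$ is a linear subspace, that it is closed under multiplication by arbitrary elements of $\cA^{s\cU}$ on both sides, and that it is norm-closed. Linearity is immediate from the previous proposition (the one computing the $\cU$-strict limit of $(a_i + \lambda b_i)_{i\in\cI}$): if $(a_i)_{i\in\cI}$ and $(b_i)_{i\in\cI}$ are $\cU$-strict convergent to $0$, then so is $(a_i + \lambda b_i)_{i\in\cI}$, to $0 + \lambda\cdot 0 = 0$.

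For the absorption property, suppose $(a_i)_{i\in\cI} \in \cA^{s\cU}$ is $\cU$-strict convergent to $a_{\cU}$ and $(b_i)_{i\in\cI} \in J$ is $\cU$-strict convergent to $0$. By the product computation in Proposition \ref{asucalgebra}, $(a_i b_i)_{i\in\cI}$ is $\cU$-strict convergent to $a_{\cU}\cdot 0 = 0$ and $(b_i a_i)_{i\in\cI}$ is $\cU$-strict convergent to $0\cdot a_{\cU} = 0$; hence both products lie in $J$. So $J$ absorbs products from $\cA^{s\cU}$ on both sides.

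For norm-closedness, let $((b_i^n)_{i\in\cI})_{n\in\N}$ be a sequence in $J$ converging in $\cA^{\cU}$ to some $(\beta_i)_{i\in\cI}$. Since $J \subset \cA^{s\cU}$ and $\cA^{s\cU}$ is norm-closed by Proposition \ref{asucalgebra}, the limit $(\beta_i)_{i\in\cI}$ already belongs to $\cA^{s\cU}$; let $\beta_{\cU}\in B(\cH)$ be its $\cU$-strict limit. It remains to show $\beta_{\cU} = 0$. Fix $x\in\cA$ and $\varepsilon>0$; by Remark \ref{conjuntosomega} choose $n$ large enough that $\norm{(b_i^n)_{i\in\cI} - (\beta_i)_{i\in\cI}}_{\cU} < \varepsilon/(3(\norm{x}+1))$, so that $\Omega_n(\varepsilon/(3(\norm{x}+1)))\in\cU$, and then pick $i$ in the intersection of that $\Omega_n$-set with $\{i : \norm{b_i^n x - 0\cdot x} < \varepsilon/3\}$ (which is in $\cU$ since $b_i^n$ is $\cU$-strict convergent to $0$) and with the set where $\norm{\beta_{\cU}x - \beta_i x} < \varepsilon/3$ (in $\cU$ since $\beta_i$ is $\cU$-strict convergent to $\beta_{\cU}$). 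The triangle inequality
\begin{equation*}
\norm{\beta_{\cU}x} \leq \norm{\beta_{\cU}x - \beta_i x} + \norm{\beta_i x - b_i^n x} + \norm{b_i^n x} < \tfrac{\varepsilon}{3} + \norm{\beta_i - b_i^n}\norm{x} + \tfrac{\varepsilon}{3} < \varepsilon
\end{equation*}
then gives $\norm{\beta_{\cU}x} < \varepsilon$ for every $\varepsilon>0$, hence $\beta_{\cU}x = 0$ for all $x\in\cA$; arguing symmetrically with $x\beta_i$ yields $x\beta_{\cU} = 0$ as well, and since the representation of $\cA$ on $\cH$ is non-degenerate this forces $\beta_{\cU} = 0$, i.e.\ $(\beta_i)_{i\in\cI}\in J$. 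I expect the only mildly delicate point to be this last step — making sure that the vanishing of $\beta_{\cU}x$ and $x\beta_{\cU}$ for all $x\in\cA$ actually forces $\beta_{\cU}=0$, which is exactly where non-degeneracy of the fixed representation is used; everything else is a direct transcription of the estimates already appearing in Proposition \ref{asucalgebra}.
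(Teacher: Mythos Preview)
Your proof is correct and follows essentially the same route as the paper's: the subspace and absorption properties are immediate from the computations in Proposition~\ref{asucalgebra}, and the norm-closedness argument is the identical three-term triangle inequality estimate using Remark~\ref{conjuntosomega} together with non-degeneracy of the representation to force $\beta_{\cU}=0$. Your use of $\varepsilon/(3(\norm{x}+1))$ in place of $\varepsilon/(3\norm{x})$ is a harmless refinement avoiding the case $x=0$, and the symmetric argument for $x\beta_{\cU}=0$ is not actually needed (non-degeneracy already gives $\beta_{\cU}=0$ from $\beta_{\cU}x=0$ for all $x$), but neither point affects correctness.
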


\begin{proof}

We only have to show that $J$ is norm closed. Consider $((a_{i}^{n})_{i\in\cI})_{n\in\N}\subset J$  a sequence that converges in norm to $(\alpha_{i})_{i\in\cI}\in  \cA^{s\cU}$. 
Let $\alpha_{\cU}$ be the $\cU$-strict limit of $(\alpha_{i})_{i\in\cI}$. 
Let $\varepsilon>0$. Take $n\in\N$ large enough such that 
 $\Omega_{n}\Big(\displaystyle\frac{\varepsilon}{3\norm{x}}\Big)\in \cU$, 
 and
$i\in\{i\in \cI:\norm{(\alpha_{i}-\alpha_{\cU})x}<\frac{\varepsilon}{3}\}\cap\{i\in \cI:\norm{a_{i}^{n}x}<\frac{\varepsilon}{3}\}\cap\Omega_{n}\Big(\displaystyle\frac{\varepsilon}{3\norm{x}}\Big) \in \cU.$
  We then have that
  $\norm{\alpha_{\cU} x}\leq \norm{(\alpha_{\cU}-\alpha_{i})x} + \norm{\alpha_{i}x} \leq \frac{\varepsilon}{3} + \norm{(\alpha_{i}-a_{i}^{n})x} + \norm{a_{i}^{n}x}\leq\varepsilon.$
Since the action of $\cA$ on $\mathcal{H}$ is nondegenerate, $\alpha_{\cU} = 0$. 
\end{proof}

There exists a natural embedding of $\cA$ in $\cA^{s\cU}$, via the constant sequences $a\mapsto (a)_{i\in \cI}$. It is clear that this element is $\cU$-strict convergent to $a$. Moreover, since the representation of $\cA$ in $B(\cH)$ is faithful and nondegenerate, it follows that there exists a natural embedding of $\cA$ in $\cA^{s\cU}/J$.

Recall that an ideal $I$ in a $C^{\ast}$-algebra $\cA$ is  {\it essential} if $I\cap K$ is nontrivial for every ideal $K\neq\{0\}$, or equivalently, $aI=0$ implies $a=0$.

\begin{lemma}\label{essential ideal} Let $\cU$ be an ultrafilter defined over $\cI$ and let $\cA$ be a $C^{\ast}$-algebra.  Consider the $C^{\ast}$-algebra $\cA^{s\cU}/J$. 
The image of $\cA$ in $\cA^{s\cU}/J$ is an essential ideal. 
\end{lemma}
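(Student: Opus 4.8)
The plan is to verify the two equivalent formulations of essentiality directly against the structure of $\cA^{s\cU}/J$. I will use the characterization: the image of $\cA$, call it $\iota(\cA)$, is essential in $\cA^{s\cU}/J$ if and only if for every nonzero $b = (b_i)_{i\in\cI} + J$ in $\cA^{s\cU}/J$ there is some $a\in\cA$ with $b\cdot\iota(a)\neq 0$ in the quotient. So first I would fix a representative $(b_i)_{i\in\cI}\in\cA^{s\cU}$ whose class in $\cA^{s\cU}/J$ is nonzero; by definition of $J$ this means $(b_i)_{i\in\cI}$ is \emph{not} $\cU$-strict convergent to $0$, i.e. its $\cU$-strict limit $b_\cU\in B(\cH)$ (which exists since $(b_i)_{i\in\cI}\in\cA^{s\cU}$) is nonzero. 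The next step is to exploit nondegeneracy of the representation $\cA\subset B(\cH)$: since $b_\cU\neq 0$, there is some $x\in\cA$ with $b_\cU x\neq 0$ (if $b_\cU x = 0$ for all $x\in\cA$, then $b_\cU$ annihilates the dense subspace $\cA\cH$, forcing $b_\cU=0$). Note $b_\cU x\in\cA$.

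The heart of the argument is then to show that the class of $(b_i x)_{i\in\cI}$ in $\cA^{s\cU}/J$ equals the class of $\iota(b_\cU x)$, and in particular is nonzero. By Proposition on $\cU$-strict convergence of products (closure of $\cA^{s\cU}$ under products), $(b_i x)_{i\in\cI}$ is $\cU$-strict convergent to $b_\cU x$. Meanwhile $\iota(b_\cU x) = (b_\cU x)_{i\in\cI}$ is $\cU$-strict convergent to $b_\cU x$ as well. Hence their difference $(b_i x - b_\cU x)_{i\in\cI}$ is $\cU$-strict convergent to $0$, so it lies in $J$; therefore in $\cA^{s\cU}/J$ we have $b\cdot\iota(x) = (b_i x)_{i\in\cI} + J = (b_\cU x)_{i\in\cI} + J = \iota(b_\cU x)$. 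Since $\iota$ is an embedding (as noted just before the lemma, using faithfulness and nondegeneracy) and $b_\cU x\neq 0$, this element is nonzero in $\cA^{s\cU}/J$. This shows $b\cdot\iota(\cA)\neq 0$, which is exactly what essentiality requires.

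I should also confirm at the outset that $\iota(\cA)$ is genuinely an \emph{ideal} of $\cA^{s\cU}/J$, not merely a subalgebra — but this follows from the observation recorded in the definition of $\cU$-strict convergence: if $(a_i)_{i\in\cI}$ is $\cU$-strict convergent to $a_\cU$ and $y\in\cA$, then $(a_i)_{i\in\cI}\cdot\iota(y)$ is, by the product computation, $\cU$-strict convergent to $a_\cU y\in\cA$, and one checks its class in the quotient is $\iota(a_\cU y)$ by the same "difference lies in $J$" trick; symmetrically on the left. Hence $\iota(\cA)$ absorbs multiplication from $\cA^{s\cU}/J$.

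The only genuine obstacle I anticipate is the bookkeeping needed to pass cleanly between the three notions of convergence ($\cU$-WOT limit, $\cU$-strict limit, and the $\cU$-limit of Definition \ref{convergenciaultrafiltro} for nets in $\cA$ itself) and to make sure the identity $b\cdot\iota(x) = \iota(b_\cU x)$ is literally an equality of classes modulo $J$ rather than merely an approximate statement. Everything reduces to the already-established fact that $\cA^{s\cU}$ is closed under products with the $\cU$-strict limit being multiplicative, plus nondegeneracy; no new estimates are required.
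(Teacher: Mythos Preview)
Your proof is correct and follows essentially the same approach as the paper: both first establish that $(b_i x)_{i\in\cI} + J = \iota(b_{\cU}x)$ (so $\iota(\cA)$ is an ideal), then use nondegeneracy of the representation to link ``$b_{\cU}\neq 0$'' with ``$b_{\cU}x\neq 0$ for some $x\in\cA$.'' The only cosmetic difference is that you use the annihilator characterization of essentiality ($b\cdot\iota(\cA)=0\Rightarrow b=0$) while the paper phrases it via the intersection-of-ideals characterization ($J'\cap\iota(\cA)=0\Rightarrow J'=0$); the underlying computation is identical.
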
 

\begin{proof}
 Take $(b_{i})_{i\in \cI}\in\cA^{s\cU}$, let $b_{\cU}$ be its $\cU$-strict limit, and take $a\in \cA$. Then $(b_{i}a)_{i\in \cI}$  and $(b_{\cU}a)_{i\in\cI}$ are both 
 $\cU$-strict convergent to $b_{\cU}a$.
It follows that $(b_ia)_{i\in\cI}$ and $(b_{\cU}a)_{i\in\cI}$ are equal in $\cA^{s\cU}/J$. Analogously,
 $(ab_i)_{i\in\cI}$ is equal to $(ab_{\cU})_{i\in\cI}$ in $\cA^{s\cU}/J$.

 Suppose that $J'\subset \cA^{s\cU} /J$ is an ideal such that $J'\cap\cA=\{0\}$. 
If $(b_{i})_{i\in \cI}\in\cA^{s\cU} $  projects to $J'$, then $(b_{i}x)_{i\in\cI}\in J$,for each $x\in\cA$. Let $b_{\cU}$ be the $\cU$-strict limit of 
$(b_{i})_{i\in\cI}$.  
Hence $(b_{i}x)_{i\in\cI}$ is $\cU$-strict convergent to $b_{\cU}x$. Then 
$b_{\cU}x=0$ for all $x\in\cA$. It follows that $b_{\cU}=0$ and then $J'=\{0\}$.
\end{proof}

\subsection{Ultrafilters and approximate units}

Every $C^{\ast}$-algebra $\cA$ has an approximate unit, namely, there exist a directed set $\cI$ and a net $(e_{i})_{i\in \cI}\subset \cA$ such that for every $x\in\cA$, the nets $(xe_i)_{i\in \cI}$ and $(e_ix)_{i\in \cI}$ converge to $x$ (see  \cite[Chapter I.4]{MR3839621}). 
Approximate units can be taken to be positive and uniformly bounded, in which case they are elements of $\prod_{\cI}{\cA}$.
In what follows, we will focus in the case where the ultrafilters are defined over this directed set $\cI$. Observe that
for a cofinal ultrafilter $\cU$,  the sets $\{i\in \cI:\norm{xe_{i}-x}<\varepsilon\}$ and $\{i\in \cI: \norm{e_{i}x-x}<\varepsilon\}$
belong to $\cU$, for every $x\in \cA$ and for every $\varepsilon>0$.
Moreover, when $\cA$ is  unital,  $\cI$ can be taken to be the set with one element $\{1_{\cA}\}$ and the approximate unit to be equal to $\{1_{\cA}\}$. In this case, the only ultrafilter is the set $\{1_{\cA}\}$, which is cofinal.

\begin{theorem}\label{main theorem}
Let $\cA$ be a $C^{\ast}$-algebra and let $(e_{i})_{i\in \cI}\in \prod_{i\in\cI}\cA$ be an approximate unit for $\cA$. Let $\cU$ be  a cofinal ultrafilter over $\cI$. Then the $C^{\ast}$-algebra $\cA^{s\cU}/J$ is the multiplier algebra of $\cA$.
\end{theorem}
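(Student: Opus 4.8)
The plan is to verify the universal property of the multiplier algebra for the pair $(\cA, \cA^{s\cU}/J)$, using the characterization that $\eme(\cA)$ is, up to isomorphism, the unique $C^*$-algebra containing $\cA$ as an essential ideal and satisfying the universal extension property stated in the introduction. By Lemma \ref{essential ideal} we already know that (the image of) $\cA$ sits inside $\cA^{s\cU}/J$ as an essential ideal, so what remains is to produce, for every $C^*$-algebra $\cB$ containing $\cA$ as an ideal, a unique $*$-homomorphism $\varphi\colon \cB\to\cA^{s\cU}/J$ restricting to the identity on $\cA$. Equivalently, since the multiplier algebra of $\cA$ itself has this property, it suffices to exhibit a $*$-isomorphism $\Phi\colon \eme(\cA)\to\cA^{s\cU}/J$ that is the identity on $\cA$; this reduces everything to a direct comparison of $\cA^{s\cU}/J$ with $\eme(\cA)$ realized concretely as the idealizer $\{T\in B(\cH): T\cA\subseteq\cA,\ \cA T\subseteq\cA\}$ (using the fixed faithful nondegenerate representation $\cA\subseteq B(\cH)$, under which $\eme(\cA)$ is exactly this idealizer).

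First I would define the map into $\eme(\cA)$: given $(a_i)_{i\in\cI}\in\cA^{s\cU}$ with $\cU$-strict limit $a_{\cU}\in B(\cH)$, the definition of $\cU$-strict convergence already records that $a_{\cU}x\in\cA$ and $xa_{\cU}\in\cA$ for all $x\in\cA$, so $a_{\cU}\in\eme(\cA)$. I would check that $(a_i)_{i\in\cI}\mapsto a_{\cU}$ is a well-defined $*$-homomorphism $\cA^{s\cU}\to\eme(\cA)$ — well-definedness on $\cA^{\cU}$-classes is Proposition (\ref{wdustconv}); additivity, multiplicativity and $*$-compatibility of $\cU$-strict limits were essentially verified inside the proof of Proposition \ref{asucalgebra} (the same computations show $a_{\cU}+\lambda b_{\cU}$, $a_{\cU}b_{\cU}$, $a_{\cU}^*$ are the relevant strict limits). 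Its kernel is precisely $J$ by definition of $J$, so we get an injective $*$-homomorphism $\Phi\colon\cA^{s\cU}/J\hookrightarrow\eme(\cA)$, and $\Phi$ is the identity on $\cA$ since a constant sequence $(a)_{i\in\cI}$ is $\cU$-strict convergent to $a$.

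The crux is surjectivity of $\Phi$, i.e. every multiplier $T\in\eme(\cA)$ arises as a $\cU$-strict limit of some norm-bounded sequence from $\cA$. Here is where the approximate unit and cofinality of $\cU$ enter: set $a_i:=Te_i\in\cA$, which is norm-bounded by $\norm{T}$, so $(a_i)_{i\in\cI}\in\prod_{\cI}\cA$. I claim $(a_i)_{i\in\cI}$ is $\cU$-strict convergent to $T$. Indeed, for $x\in\cA$ and $\varepsilon>0$, $\norm{a_ix - Tx} = \norm{T(e_ix - x)}\le\norm{T}\,\norm{e_ix-x}$, and $\norm{xa_i - xT}=\norm{(xT)e_i - xT}$ where $xT\in\cA$; since $\cU$ is cofinal the sets $\{i:\norm{e_ix-x}<\varepsilon/\norm{T}\}$ and $\{i:\norm{(xT)e_i - xT}<\varepsilon\}$ lie in $\cU$ (using the property of cofinal ultrafilters with respect to approximate units recorded just before the theorem), so their intersection puts the required set in $\cU$. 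Hence $T = \Phi((Te_i)_{i\in\cI} + J)$, proving $\Phi$ is onto. I would also note the unital case is trivial: $\cI=\{1_\cA\}$ forces $\cA^{s\cU}/J$ to collapse essentially to $\cA=\eme(\cA)$.

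Finally, uniqueness in the universal property follows formally: if $\cB\supseteq\cA$ as an ideal, compose the canonical $*$-homomorphism $\cB\to\eme(\cA)$ with $\Phi^{-1}$; and any two extensions $\cB\to\cA^{s\cU}/J$ of $\mathrm{id}_{\cA}$ must agree because they agree on the essential ideal $\cA$ (two $*$-homomorphisms into a $C^*$-algebra agreeing on an essential ideal of the source, with image an essential ideal, coincide — this is the standard rigidity of essential ideals). I expect the main obstacle to be the surjectivity step, specifically making sure the approximate unit argument is uniform enough: one must be slightly careful that $\norm{(xT)e_i - xT}\to 0$ along $\cU$ uses $xT\in\cA$ (which holds precisely because $T$ is a \emph{left} multiplier), and symmetrically $\norm{T e_i x - Tx}$ uses nothing but boundedness of $T$; so both one-sided conditions in the definition of $\cU$-strict limit genuinely require $T\in\eme(\cA)$ and the cofinality of $\cU$, and no weaker hypothesis on $\cU$ will do.
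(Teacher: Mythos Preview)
Your proof is correct, and the core computation---showing that $(Te_i)_{i\in\cI}$ is $\cU$-strict convergent to $T$ via cofinality and the approximate unit---is exactly the engine the paper uses as well. The organization, however, is different. The paper verifies the universal property \emph{directly}: for an arbitrary $\cB\supseteq\cA$ it defines $\psi\colon\cB\to\cA^{s\cU}$ by $\psi(b)=(be_i)_{i\in\cI}$, checks by hand that $\pi\circ\psi$ is a $*$-homomorphism fixing $\cA$ (including a separate multiplicativity estimate), and only afterwards, in Corollary~\ref{corolary}, deduces the idealizer description $\eme(\cA)\cong\{m\in B(\cH):m\cA,\cA m\subseteq\cA\}$. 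You instead \emph{assume} that idealizer description of $\eme(\cA)$ as known and build an explicit isomorphism $\Phi\colon\cA^{s\cU}/J\to\eme(\cA)$, reading off the homomorphism properties from the computations already carried out in Proposition~\ref{asucalgebra}; the universal property then comes for free by transporting the one for $\eme(\cA)$. Your route is a bit more economical (no fresh multiplicativity computation) but presupposes an external fact the paper chooses to recover as a consequence; the paper's route is self-contained and yields Corollary~\ref{corolary} as a genuine byproduct.
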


\begin{proof}
We saw in Lemma \ref{essential ideal} that $\cA$ is an essential ideal in  $\cA^{s\cU}/J$. 
We are left to show that 
for any  $C^{\ast}$-algebra $\mathcal{B}$ containing $\cA$ as an ideal,
 there is a unique $C^{\ast}$-homomorphism $\varphi:\mathcal{B}\to \cA^{s\cU}/J$ 
 such that $\varphi (a) = a$.\\
To this end, let $b\in\mathcal{B}$ and consider $\psi:\mathcal{B}\to\cA^{s\cU}$ defined by $\psi(b)=(be_{i})_{i\in\cI}$. 
To see that $\psi$ is well defined, let $\varepsilon>0$ and let $x\in\cA$. 
Let $i_{0}\in\cI$ such that if $i\geq i_{0}$, then $\norm{xbe_{i} - xb}<\varepsilon$. 
Let $i_{1}\in\cI$ such that if $i\geq i_{1}$, then $\norm{e_{i}x-x}<\displaystyle\frac{\varepsilon}{\norm{b}}$.
By the cofinality of $\cU$ one obtains that $\{i\in\cI:\norm{be_{i}x - bx}<\varepsilon,\norm{xbe_{i} - xb}<\varepsilon\}\in\cU$. 
Observe that since $b\notin B(\cH)$, the last line does not imply that $(be_{i})_{i\in\cI}\in \cA^{s\cU}$.
We must ``represent'' $b$ in $B(\cH)$. 
To this end, 
let $b_{\cU\text{-}WOT}\in B(\cH)$ be the $\cU$-WOT-limit of $(be_{i})_{i\in\cI}\in \cA^{\cU}$, an argument similar to one given in the proof of Proposition \ref{asucalgebra}, shows that $b_{\cU\text{-}WOT}x = bx$ and
 $xb_{\cU\text{-}WOT} = xb$. 
 Thus $(be_{i})_{i\in\cI}$ is $\cU$-strict convergent to $b_{\cU\text{-}WOT}$. The same procedure shows that $(e_{i}b)_{i\in\cI}$ is $\cU$-strict convergent to $b_{\cU\text{-}WOT}$.

Call $\pi$ the quotient projection to $\cA^{s\cU}/J$, and let  $\varphi = \pi \circ \psi$. 
It is clear that $\varphi$ is  linear and bounded.
Since $\psi(b^{*}) = (b^{*}e_{i})_{i\in\cI}$ and $\psi(b)^{*} = (e_{i}b^{*})_{i\in\cI}$ and they are both $\cU$-strict convergent 
to $(b^{*})_{\cU\text{-}WOT}$, hence 
$\psi(b^{*}) - \psi(b)^{*}$ is an element of $J$, 
so $\varphi $ is a $\ast$-preserving homomorphism.

To see that $\varphi$ is multiplicative, fix $b,b'\in\mathcal{B}$ of norm $1$ and
take $x\in\cA$, $\varepsilon>0$, $M = \sup_{i\in\cI}\{\norm{e_{i}}\}$ and $i$ in the set
$$
 \{i\in\cI:\norm{e_{i}x-x}<\frac{\varepsilon}{3M}\}\cap\{i\in\cI:\norm{x-e_{i}x}<\frac{\varepsilon}{3M}\}\cap \{i\in\cI:\norm{e_{i}b'x-b'x}<\frac{\varepsilon}{3M}\},$$
which is an element of $\cU$. 
Then
\begin{align*}
\norm{(be_{i}b'e_{i} - bb'e_{i})x}  &\leq 
\norm{e_{i}b'e_{i}x - e_{i}b'x} + \norm{e_{i}b'x-b'x} 
+ \norm{b'x-b'e_{i}x} \\
&\leq \norm{e_{i}b'}\norm{e_{i}x-x} + \norm{e_{i}b'x-b'x} + \norm{b'}\norm{x-e_{i}x} < \varepsilon.
\end{align*}
Take $i\in\{i\in\cI:\norm{xbe_{i} - xb}<\displaystyle\frac{\varepsilon}{M}\}\in\cU$.
 Then
$\norm{x(be_{i}b'e_{i} - bb'e_{i})} \leq \norm{xbe_{i} - xb}\norm{b'e_{i}} <\varepsilon.$
It follows that $\psi(b)\psi(b') - \psi(bb')=(be_{i}b'e_{i} - bb'e_{i})_{i\in\cI}$  is an element of  $J$.

Since $(ae_{i} -a)_{i\in\cI}$ is $\cU$-strict convergent to $0$,  for all $a\in\cA$, so $\varphi(a) = a$ in $ \cA^{s\cU}/J$.

Suppose that there exists another $C^{\ast}$-homomorphism $\varphi':\mathcal{B}\to\cA^{s\cU}/J$ such that $\varphi'(a) = a$ for all $a\in\cA$. 
Then \begin{equation*}\varphi'(b)a = \varphi'(b)\varphi'(a) = ba =\varphi(b)\varphi(a) = \varphi(b)a.\end{equation*}
By Lemma \ref{essential ideal}, $\varphi(b)=\varphi'(b)$.
\end{proof}

Ultraproducts provide a new point of view for dealing with multiplier algebras. For instance,
the identification of $\eme(\cA)$ with $\cA^{s\cU}/J$  yields an easy proof of the next characterization of multipliers, 
without using double centralizers.

\begin{corollary} \label{corolary}
$\eme(\cA)$ is isomorphic to $\eme:=\{m\in  B(\cH): \text{ for all } a\in \cA, am\in \cA\,, \, ma\in \cA\}$. In particular, $\eme(\cA)$ is unital.
\end{corollary}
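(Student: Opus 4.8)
The plan is to obtain the isomorphism $\eme(\cA)\cong\eme$ by combining Theorem \ref{main theorem} with a direct identification of $\cA^{s\cU}/J$ with $\eme$. First I would define a map $\Phi\colon\cA^{s\cU}\to B(\cH)$ sending a class $(a_i)_{i\in\cI}$ to its $\cU$-strict limit $a_\cU$. This is well defined: the Proposition asserting that $\cU$-strict convergence, together with the value of the limit, is independent of the chosen representative of a class in $\cA^{\cU}$ takes care of the ambiguity of representatives, while the $\cU$-strict limit of a fixed sequence is unique because the representation of $\cA$ on $\cH$ is nondegenerate (if $a_\cU x=a'_\cU x$ for all $x\in\cA$, then $a_\cU=a'_\cU$, and for any $x,\varepsilon$ the two sets $\{i:\norm{a_ix-a_\cU x}<\varepsilon/2\}$ and $\{i:\norm{a_ix-a'_\cU x}<\varepsilon/2\}$ lie in $\cU$, hence meet). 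By the very definition of $\cU$-strict convergence, $a_\cU x\in\cA$ and $xa_\cU\in\cA$ for every $x\in\cA$, so $\Phi$ takes values in $\eme$. The computations already carried out in the proof of Proposition \ref{asucalgebra} — sums, scalar multiples and products of $\cU$-strict convergent sequences are $\cU$-strict convergent to the corresponding combination of limits, and likewise for adjoints — show that $\Phi$ is a $\ast$-homomorphism, and its kernel is exactly $J$ by the definition of $J$. Hence $\Phi$ descends to an injective $\ast$-homomorphism $\overline{\Phi}\colon\cA^{s\cU}/J\to\eme$.

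The remaining point, which I expect to be the main step, is surjectivity of $\overline{\Phi}$. Given $m\in\eme$, I would consider the sequence $(me_i)_{i\in\cI}$, where $(e_i)_{i\in\cI}$ is the approximate unit fixed in Theorem \ref{main theorem}. Since $m\in\eme$ and $e_i\in\cA$, each $me_i$ lies in $\cA$, and $\sup_i\norm{me_i}\le\norm{m}\sup_i\norm{e_i}<\infty$, so $(me_i)_{i\in\cI}\in\prod_{\cI}\cA$. I claim it is $\cU$-strict convergent to $m$: for $x\in\cA$ and $\varepsilon>0$ one has $\norm{me_ix-mx}\le\norm{m}\norm{e_ix-x}$, while $\norm{xme_i-xm}=\norm{(xm)e_i-xm}$ with $xm\in\cA$; both right-hand sides tend to $0$ along the net $(e_i)_{i\in\cI}$, so by cofinality of $\cU$ the set $\{i:\norm{me_ix-mx}<\varepsilon,\ \norm{xme_i-xm}<\varepsilon\}$ belongs to $\cU$. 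Thus $\overline{\Phi}\big([(me_i)_{i\in\cI}]\big)=m$, so $\overline{\Phi}$ is onto; combining with Theorem \ref{main theorem} gives $\eme(\cA)\cong\cA^{s\cU}/J\cong\eme$. I would also note in passing that $\eme$ is a $\ast$-subalgebra of $B(\cH)$ — closedness under products and adjoints is immediate from $a(mn)=(am)n$ and $am^{\ast}=(ma^{\ast})^{\ast}$ — so that the phrase ``$\overline{\Phi}$ is a $\ast$-isomorphism onto $\eme$'' is meaningful; norm-closedness of $\eme$ then comes for free from the isomorphism.

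For unitality, the identity operator $\mathrm{id}_{\cH}$ satisfies $a\cdot\mathrm{id}_{\cH}=a\in\cA$ and $\mathrm{id}_{\cH}\cdot a=a\in\cA$ for every $a\in\cA$, hence $\mathrm{id}_{\cH}\in\eme$, and it clearly acts as a two-sided unit for $\eme$; transporting through the isomorphism shows $\eme(\cA)$ is unital. The only genuinely delicate point in the whole argument is the $\cU$-strict convergence of $(me_i)_{i\in\cI}$, i.e.\ observing that the defining condition $xm\in\cA$ of membership in $\eme$ is precisely what is needed to control the ``left'' products $xme_i$; everything else is bookkeeping already done in the preceding propositions.
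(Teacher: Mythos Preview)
Your argument is correct and follows essentially the same route as the paper: define the map $\cA^{s\cU}\to\eme$ by taking the $\cU$-strict limit, observe (via Proposition~\ref{asucalgebra}) that it is a $\ast$-homomorphism with kernel $J$, and prove surjectivity by showing that $(me_i)_{i\in\cI}$ is $\cU$-strict convergent to $m$ using cofinality and the membership conditions $xm,\,mx\in\cA$. The paper's unitality argument is the special case $m=\mathrm{id}_{\cH}$ of the surjectivity step, exhibiting $[(e_i)_{i\in\cI}]$ as the unit of $\cA^{s\cU}/J$, which is exactly what your final paragraph does from the $\eme$ side.
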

\begin{proof}
Consider  $\varphi:\cA^{s\cU}\to \eme$ defined by $\varphi((a_{i})_{i\in\cI}) = \displaystyle\lim_{\cU\text{-} strict}{a_{i}}$. This map is well defined, it is a 
$C^{\ast}$-homomorphism (Proposition \ref{asucalgebra}), and $\ker(\varphi)=J$.
To show that $\varphi$ is surjective, let $m\in\mathcal{M}$. Then $am\in\cA$ and  $ma\in\cA$  for every $a\in\cA$. 
Hence, for all $\varepsilon>0$, the sets $\{i\in\cI:  \norm{a(me_{i} - m)}<\varepsilon\}$
and  $\{i\in\cI: \norm{(me_{i}-m)a}<\varepsilon\}$ are elements of $\cU$. 
Then  $(me_{i})_{i\in \cI}\in\cA^{\cU}$ is $\cU$-strict convergent to $m$.

Taking $m=1$, it follows that the image of $(e_{i})_{i\in \cI}$ in $\cA^{s\cU}/J$ is the unit of $\cA^{s\cU}/J$.
\end{proof}

For a second application,  observe that  every $C^{\ast}$-homomorphism $\phi:\cA\rightarrow \mathcal{B}$ defines a natural $C^{\ast}$-homomorphism $\phi':\cA^{\cU}\rightarrow \mathcal{B}^{\cU}$. When $\phi$ is surjective, a proof similar to one given in Proposition \ref{asucalgebra} shows that $\phi'(\cA^{s\cU})\subset\mathcal{B}^{s\cU}$. This together with Theorem \ref{main theorem} immediately gives the following known result.
\begin{proposition}
Let $\cA, \mathcal{B}$ be $C^{\ast}$-algebras and let $\phi:\cA \rightarrow \mathcal{B}$ be a surjective homomorphism. The natural extension
 $\phi':\cA^{\cU}\rightarrow \mathcal{B}^{\cU}$ induces the following commutative diagram:
\begin{equation*}
  \begin{tikzcd}
    \cA \arrow{r} \arrow{d}{\phi} & \cA^{s\cU} \arrow{r} \arrow{d}{\phi'} & \eme(\cA) \arrow{r} \arrow{d}{\phi''} & \eme(\cA)/\cA \arrow{d}{\phi''{'}} \\
      \mathcal{B} \arrow{r}& \mathcal{B}^{s\cU} \arrow{r}  & \eme(\mathcal{B}) \arrow{r} & \eme(\mathcal{B})/\mathcal{B}.
  \end{tikzcd}
\end{equation*}

\end{proposition}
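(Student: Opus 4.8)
The plan is to build the vertical arrows $\phi',\phi'',\phi'''$ from left to right, verifying that each newly formed square commutes, and to identify $\cA^{s\cU}/J$ with $\eme(\cA)$ (and $\cB^{s\cU}/J$ with $\eme(\cB)$) through Theorem \ref{main theorem}. Fix faithful nondegenerate representations $\cA\subseteq B(\cH)$ and $\cB\subseteq B(\cH_{\cB})$, as needed to define $\cA^{s\cU}$ and $\cB^{s\cU}$. First I would record the routine fact that $\phi$ induces a $\ast$-homomorphism $\phi'\colon\cA^{\cU}\to\cB^{\cU}$ by $\phi'\big((a_{i})_{i\in\cI}\big)=(\phi(a_{i}))_{i\in\cI}$: it is well defined because $\norm{\phi(a_{i})}\le\norm{a_{i}}$, so $\phi$ carries bounded sequences to bounded sequences and null sequences to null sequences, and it is visibly compatible with the constant-sequence embeddings of $\cA$ and $\cB$.

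The substance is the inclusion $\phi'(\cA^{s\cU})\subseteq\cB^{s\cU}$ announced before the statement. To prove it, take $(a_{i})_{i\in\cI}\in\cA^{s\cU}$ with $\cU$-strict limit $a_{\cU}$ and let $b\in B(\cH_{\cB})$ be the $\cU$-WOT-limit of the bounded sequence $(\phi(a_{i}))_{i\in\cI}$, which exists by Lemma \ref{existenciauwot} applied to $\cB$. Mimicking the norm-closedness step in the proof of Proposition \ref{asucalgebra}, for each $x\in\cA$ and $\xi,\eta\in\cH_{\cB}$ one computes
\[
\langle b\,\phi(x)\xi,\eta\rangle=\lim_{\cU}\langle\phi(a_{i}x)\xi,\eta\rangle=\langle\phi(a_{\cU}x)\xi,\eta\rangle,
\]
the last equality because $a_{\cU}x\in\cA$ and $\lim_{\cU}\norm{\phi(a_{i}x)-\phi(a_{\cU}x)}\le\lim_{\cU}\norm{a_{i}x-a_{\cU}x}=0$; hence $b\,\phi(x)=\phi(a_{\cU}x)$, and likewise $\phi(x)\,b=\phi(xa_{\cU})$. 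Here is the one place surjectivity of $\phi$ enters: since $\phi(\cA)=\cB$, every $y\in\cB$ equals $\phi(x)$ for some $x\in\cA$, so $by=\phi(a_{\cU}x)\in\cB$ and $yb\in\cB$, and, using Notation \ref{notacion} for $(a_{i})_{i\in\cI}$ together with the contractivity of $\phi$, the set $\{i\in\cI:\norm{\phi(a_{i})y-by}<\varepsilon,\ \norm{y\phi(a_{i})-yb}<\varepsilon\}$ contains $A_{x}(\varepsilon)$ and hence lies in $\cU$. Thus $(\phi(a_{i}))_{i\in\cI}$ is $\cU$-strict convergent to $b$, so $\phi'(\cA^{s\cU})\subseteq\cB^{s\cU}$.

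Next I would pass to the quotients. The displayed identity with $a_{\cU}=0$ gives $\phi'(J)\subseteq J$: if $(a_{i})_{i\in\cI}$ is $\cU$-strict convergent to $0$, its image has $\cU$-WOT-limit $b$ with $b\,\phi(x)=0$ for all $x\in\cA$, hence $b\,\cB=0$, and nondegeneracy forces $b=0$. Therefore $\phi'$ descends to a $\ast$-homomorphism $\phi''\colon\cA^{s\cU}/J\to\cB^{s\cU}/J$, which by Theorem \ref{main theorem} reads as a $\ast$-homomorphism $\eme(\cA)\to\eme(\cB)$; the middle square commutes by construction, and the left square commutes because $\phi''$ sends the class of a constant sequence $(a)_{i\in\cI}$ to the class of $(\phi(a))_{i\in\cI}$, i.e. $\phi''|_{\cA}=\phi$. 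In particular $\phi''(\cA)\subseteq\cB$, so $\phi''$ descends once more to $\phi'''\colon\eme(\cA)/\cA\to\eme(\cB)/\cB$, and the right square commutes. This completes the diagram.

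The only genuinely non-formal step is the inclusion $\phi'(\cA^{s\cU})\subseteq\cB^{s\cU}$ in the second paragraph: it is where surjectivity is used, and it is essentially a transcription of the norm-closedness argument of Proposition \ref{asucalgebra}. Everything else is bookkeeping with quotient maps and commuting squares, which is why the statement is called ``known'' and obtained ``immediately.''
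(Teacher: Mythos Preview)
Your proof is correct and follows exactly the approach the paper sketches in the paragraph preceding the proposition: you establish $\phi'(\cA^{s\cU})\subseteq\cB^{s\cU}$ by the $\cU$-WOT-limit argument mirroring the norm-closedness step of Proposition~\ref{asucalgebra} (which is precisely what the paper means by ``a proof similar to one given in Proposition~\ref{asucalgebra}''), invoke surjectivity at the right spot, and then descend through the quotients via Theorem~\ref{main theorem}. You have simply fleshed out what the paper leaves to the reader.
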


\section{the case of commutative and separable $C^{\ast}$-algebras}\label{sec3}
 
  Recall that when a $C^{\ast}$-algebra $\cA$ is separable, it is $\sigma$-unital, namely, there exists a countable approximate unit 
  (see  \cite[Chapter I.4]{MR3839621}). 
   That entails that the index set $\cI$ of the previous section can be taken to be equal to $\N$, in which case nonprincipal ultrafilters are cofinal and  $\prod_{\cI} \cA$ is $\ell^{\infty}(\cA)$.
 
In \cite{2016arXiv161009276A}, the authors built the multiplier algebra for  commutative and separable $C^{\ast}$-algebras using ultraproducts of $C^{\ast}$-algebras. More precisely they considered $\cA = C_{0}(X)$ where $X$ is a second countable, locally compact topological space and took  $\cU$  a nonprincipal ultrafilter defined over $\N$ to construct the multiplier algebra of $\cA$,  $C_{b}(X)$, identifying it with a quotient of a sub-$C^{\ast}$-algebra of $\prod_{\cI}{\cA}$.  For that, the authors usee the key fact that the hypothesis on $X$ entails the existence of a proper metric compatible  \cite{MR1563581}. In what follows, we will then identify the second countable, locally compact topological space $X$ with the metric space $(X,d)$, where $d$ is a proper metric on $X$. The closed ball of radius $r>0$ centered at a fixed base-point $o\in X$, will be denoted by $B_{o}(r)$. The main technical tool of \cite{2016arXiv161009276A} is the following definition. 

\begin{definition} \cite[Section 3]{2016arXiv161009276A}\label{uequi}
Let $(X,d)$ be as in the preceding discussion.
  Let $\cA = C_{0}(X)$ and let $\cU$ be a nonprincipal ultrafilter over $\mathbb{N}$. For $(f_{n})_{n\in\N}\in\ell^{\infty}(\cA)$, we say that $ (f_{n})_{n\in\N}$ is {\it $\cU$-equicontinuous on bounded sets} if, for every $r,\varepsilon>0$, there is  $\delta>0$ such that the set $\{n\in\mathbb{N}:\text{ for all }s,t\in B_{o}(r)\text{ with } d(s,t)<\delta \implies \abs{f_{n}(s)-f_{n}(t)}<\varepsilon\}$ belongs to $\cU$.
\end{definition}
 
Given $(f_{n})_{n\in\N}\in\ell^{\infty}(\cA)$, and  a fixed $x\in X$, the $\cU$-limit of the sequence $(f_{n}(x))_{n\in\N}$ is well defined. We denote $f_{\cU}:X\rightarrow\mathbb{C}$, $f_{\cU}(x):= \displaystyle\lim_{\cU}(f_{n}(x))$. The following fact was observed  in \cite{2016arXiv161009276A}.

\begin{lemma}\label{3.1 AG}
If $(f_{n})_{n\in\N}\in\ell^{\infty}(\cA)$ is $\cU$-equicontinuous on bounded sets, then $f_{\cU}$ is uniformly continuous on bounded sets.
\end{lemma}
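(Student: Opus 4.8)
The plan is to unwind both definitions and then use that a finite intersection of members of $\cU$ is nonempty, so that an inequality holding ``$\cU$-often'' can be witnessed by a single index $n$. Fix $r,\varepsilon>0$; the goal is to produce a single $\delta>0$ witnessing the uniform continuity of $f_{\cU}$ on $B_{o}(r)$. First I would apply the hypothesis of $\cU$-equicontinuity on bounded sets to the pair $r$ and $\varepsilon/2$: this provides $\delta>0$ such that
\[
S:=\{n\in\N:\ \text{for all }s,t\in B_{o}(r)\text{ with }d(s,t)<\delta\text{ one has }\abs{f_{n}(s)-f_{n}(t)}<\varepsilon/2\}\in\cU .
\]

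I then claim that this same $\delta$ works for $f_{\cU}$. Let $s,t\in B_{o}(r)$ with $d(s,t)<\delta$. Since $(f_{n})_{n\in\N}\in\ell^{\infty}(\cA)$, the sequences $(f_{n}(s))_{n\in\N}$ and $(f_{n}(t))_{n\in\N}$ are bounded in $\C$, so by the definition of $f_{\cU}$ both sets $S_{s}:=\{n\in\N:\abs{f_{n}(s)-f_{\cU}(s)}<\varepsilon/4\}$ and $S_{t}:=\{n\in\N:\abs{f_{n}(t)-f_{\cU}(t)}<\varepsilon/4\}$ belong to $\cU$. By the finite intersection property, $S\cap S_{s}\cap S_{t}\in\cU$, hence is nonempty; pick $n$ in it. For this $n$ the triangle inequality gives
\[
\abs{f_{\cU}(s)-f_{\cU}(t)}\leq\abs{f_{\cU}(s)-f_{n}(s)}+\abs{f_{n}(s)-f_{n}(t)}+\abs{f_{n}(t)-f_{\cU}(t)}<\frac{\varepsilon}{4}+\frac{\varepsilon}{2}+\frac{\varepsilon}{4}=\varepsilon .
\]
As $s,t\in B_{o}(r)$ with $d(s,t)<\delta$ were arbitrary, $f_{\cU}$ is uniformly continuous on $B_{o}(r)$, and since $r,\varepsilon>0$ were arbitrary, $f_{\cU}$ is uniformly continuous on bounded sets.

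There is no real obstacle here: the argument is the standard ``choose a witness inside the ultrafilter'' trick, and the only point requiring care is to split $\varepsilon$ in advance (into $\varepsilon/4$, $\varepsilon/2$, $\varepsilon/4$) so that the triangle-inequality estimate closes. It is worth noting that the properness of $d$ is not actually used in this lemma beyond the mere fact that the balls $B_{o}(r)$ are well defined; compactness of closed balls plays no role.
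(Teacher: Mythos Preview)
Your proof is correct; it is exactly the standard ``three-$\varepsilon$ / pick a witness in the ultrafilter'' argument, and the split $\varepsilon/4+\varepsilon/2+\varepsilon/4$ closes the triangle inequality cleanly. Note that the paper does not actually supply a proof of this lemma: it is stated as a fact observed in \cite{2016arXiv161009276A} and then used. Your argument is what one would expect such a proof to look like, and your closing remark that compactness of closed balls is not needed here is accurate --- properness of the metric only enters later, in the proof of the proposition that follows.
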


The next proposition shows that our notion of $\cU$-strict convergence coincides with the notion of $\cU$-equicontinuity on bounded sets in the case of $\cA = C_{0}(X)$, where $X$ is a locally compact, second countable, topological space. This entails that the work done here in section \ref{sec2} is indeed a generalization of the work done in \cite[Section 3]{2016arXiv161009276A}.
\begin{proposition}
 Take $(f_{n})\in\ell ^{\infty}(\cA)$ and let $f_{\cU}(x)=\displaystyle\lim_{\cU}(f_{n}(x))$. The following two conditions are equivalent:
\begin{enumerate}
\item The sequence $(f_{n})_{n\in\N}$ is $\cU$-equicontinuous on bounded sets.\label{goldequi}
\item The sequence $(f_{n})_{n\in\N}$ is $\cU$-strict convergent to $f_{\cU}$.\label{strictnatural}
\end{enumerate}
\end{proposition}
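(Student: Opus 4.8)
The plan is to prove the two implications separately, using the fact from Lemma~\ref{3.1 AG} that in either case $f_{\cU}$ is (at least) continuous, hence multiplication by $f_{\cU}$ maps $\cA = C_0(X)$ into $\cA$. Throughout I will use that multiplication in $C_0(X)$ is pointwise, so $\norm{g(f_n - f_{\cU})}_\infty = \sup_{x}\abs{g(x)}\,\abs{f_n(x) - f_{\cU}(x)}$, and that a function $g \in C_0(X)$ is, up to arbitrarily small sup-norm error, supported on a ball $B_o(r)$; this reduces the strict-convergence condition (which is symmetric here since $\cA$ is commutative, so $A_x(\varepsilon)$ only involves one inequality) to controlling $\sup_{x \in B_o(r)} \abs{f_n(x) - f_{\cU}(x)}$ for each $r$.

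For \eqref{goldequi}$\Rightarrow$\eqref{strictnatural}: fix $g \in C_0(X)$, fix $\varepsilon > 0$, and pick $r$ large enough that $\abs{g}<\varepsilon/(3(1+\sup_n\norm{f_n}_\infty))$ off $B_o(r)$, so that outside $B_o(r)$ the product $g(f_n - f_{\cU})$ is already uniformly small, independently of $n$. On the compact ball $B_o(r)$ I would cover it by finitely many $\delta$-balls (for the $\delta$ supplied by $\cU$-equicontinuity at scale $(r,\varepsilon')$) centred at points $x_1,\dots,x_k$; for each $j$ the set $\{n : \abs{f_n(x_j) - f_{\cU}(x_j)}<\varepsilon'\}$ lies in $\cU$ because $f_{\cU}(x_j) = \lim_{\cU} f_n(x_j)$, and intersecting these finitely many sets with the $\cU$-equicontinuity set gives a set in $\cU$ on which, by a three-$\varepsilon$ estimate (value at $x_j$, equicontinuity from $x_j$ to $x$, and continuity of $f_{\cU}$ from $x_j$ to $x$), $\sup_{x\in B_o(r)}\abs{f_n(x)-f_{\cU}(x)}$ is small. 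Combining the two regions shows $\{n : \norm{g(f_n - f_{\cU})}_\infty < \varepsilon\} \in \cU$, which is exactly $\cU$-strict convergence to $f_{\cU}$.

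For \eqref{strictnatural}$\Rightarrow$\eqref{goldequi}: given $r,\varepsilon>0$, choose $g \in C_0(X)$ with $0 \le g \le 1$ and $g \equiv 1$ on $B_o(r)$. By $\cU$-strict convergence the set $S := \{n : \norm{g(f_n - f_{\cU})}_\infty < \varepsilon/3\}$ belongs to $\cU$; in particular $\sup_{x\in B_o(r)}\abs{f_n(x)-f_{\cU}(x)} < \varepsilon/3$ for $n \in S$. Since $f_{\cU}$ is uniformly continuous on $B_o(r)$ (Lemma~\ref{3.1 AG}, or just directly from continuity on a compact set), there is $\delta>0$ with $\abs{f_{\cU}(s)-f_{\cU}(t)}<\varepsilon/3$ whenever $s,t\in B_o(r)$, $d(s,t)<\delta$; then for every $n\in S$ and such $s,t$ a three-$\varepsilon$ estimate through $f_{\cU}$ gives $\abs{f_n(s)-f_n(t)}<\varepsilon$. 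Hence the $\cU$-equicontinuity set at $(r,\varepsilon)$ contains $S \in \cU$, so it lies in $\cU$.

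The main obstacle is the first implication: the definition of $\cU$-equicontinuity only gives the modulus-of-continuity data $\delta$ on a $\cU$-large set of indices, not on all $n$, so one cannot simply quote classical Arzelà–Ascoli. The fix is the compactness-plus-finite-intersection argument above — covering $B_o(r)$ by finitely many $\delta$-balls and using that a finite intersection of $\cU$-large sets (the equicontinuity set together with the pointwise-convergence sets at the centres) is again $\cU$-large. One should also be slightly careful that $f_{\cU}$ itself is continuous enough to run the three-$\varepsilon$ estimates, but this is precisely what Lemma~\ref{3.1 AG} delivers in the forward direction, and in the reverse direction it follows a posteriori (or from the compactness of $B_o(r)$).
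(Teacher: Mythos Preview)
Your argument is correct and follows essentially the same route as the paper's: the same compact-cover plus finite-intersection plus three-$\varepsilon$ estimate for \eqref{goldequi}$\Rightarrow$\eqref{strictnatural}, and the same cutoff function $g\equiv 1$ on $B_o(r)$ plus three-$\varepsilon$ through $f_{\cU}$ for \eqref{strictnatural}$\Rightarrow$\eqref{goldequi}. The one place where the paper is more careful is in justifying the uniform continuity of $f_{\cU}$ on $B_o(r)$ in the reverse direction: your two offered reasons (Lemma~\ref{3.1 AG}, or ``continuity on a compact set'') are respectively circular and not yet established at that point; the paper instead notes that since $\cU$ is nonprincipal the sets $\{n:\norm{(f_n-f_{\cU})g}<1/k\}$ are infinite, extracts a subsequence converging uniformly to $f_{\cU}$ on $B_o(r)$, and concludes continuity of $f_{\cU}$ there as a uniform limit of continuous functions.
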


\begin{proof}

To show that $\eqref{goldequi}$ implies $\eqref{strictnatural}$,
let $(f_{n})_{n\in\N}\in\ell ^{\infty}(\cA)$  be $\cU$-equicontinuous on bounded sets. 
Let $\varepsilon>0$ and let $g\in C_{0}(X)$. Set $M = \sup_{n\in\N}\{\norm{f_{n}},\norm{g}\}$, and take $K\subset X$ a compact set such that $\abs{g(x)}<\displaystyle\frac{\varepsilon}{2M}$ if $x\notin K$. 
There exists $\delta_{1}$ such that for $x,y\in K$ with $d(x,y)<\delta_{1}$,  $\{n\in\N: \abs{f_{n}(x)-f_{n}(y)}\leq \displaystyle\frac{\varepsilon}{3M}\}\in\cU$.
By Lemma \ref{3.1 AG},  there exists $\delta_{2} > 0$ such that $\abs{f_{\cU}(x)-f_{\cU}(y)}<\displaystyle\frac{\varepsilon}{3}$,
for $x,y\in K$ with $d(x,y)<\delta_{2}$.
Take $\delta = \min\{\delta_{1},\delta_{2}\}$ 
and cover $K$ with a finite number of balls $B_{x_j}(\delta)$ of radius $\delta$ centered at $x_{j}$, $j=1,\ldots,m$.
 Since $f_{\cU}(x_j) = \displaystyle\lim_{\cU}f_{n}(x_j)$, 
it follows that the sets $A_{j} =\left\{n\in\N:\abs{f_{n}(x_j)-f_{\cU}(x_j)}<\displaystyle\frac{\varepsilon}{3M}\right\}$ belong to $\cU$.
 Therefore if $n\in \bigcap _{j=1}^{m} A_{j}\in \cU $, and $x\in K$,  taking $x_{j}$ with $d(x,x_{j})<\delta$ we get
\begin{multline*}\abs{(f_{n}(x) - f_{\cU}(x))g(x)} \leq \abs{f_{n}(x) - f_{n}(x_{j})}\norm{g} + \abs{f_{n}(x_{j}) - f_{\cU}(x_{j})}\norm{g} +\\
 \abs{f_{\cU}(x_{j}) - f_{\cU}(x)}\norm{g} < \varepsilon.\end{multline*}
On the other hand, if $x\notin K$, then $\abs{(f_{n} - f_{\cU})g (x)}  < \varepsilon$. 
This shows that  $\{n\in \N: \|f_ng-f_{\cU}g\|<\varepsilon\}\in\cU$.

To show the converse, 
take $g\in C_{0}(X)$ such that $g= 1$ in $B_{o}(r)$. By hypothesis, for all $\varepsilon>0$
 the sets $\{n\in\N:\norm{(f_{n} - f_{\cU})g}<\varepsilon\}$ are in $\cU$ and are infinite. So we can build a subsequence $(f_{n_{k}})_{k\in\N}$ such that $(f_{n_{k}})_{k\in\N}$ is uniformly convergent to $f_{\cU}$ in $B_{0}(r)$.
  By hypothesis, closed balls are compact, so $f_{\cU}$ is uniformly continuous  on $B_{o}(r)$.
 Let $\delta>0$ such that $d(x,y)<\delta$ implies $\abs{f_{\cU}(y) - f_{\cU}(x)}< \varepsilon$ in $B_{o}(r)$.
Take $n\in\{n\in\N:\norm{(f_{n} - f_{\cU})g}<\displaystyle\frac{\varepsilon}{3}\}\in\cU$.
Then for $x,y\in B_{o}(r)$ such that $d(x,y)<\delta$ we have
\begin{equation*}
\abs{f_{n}(x)-f_{n}(y)}\leq \abs{f_{n}(x)-f_{\cU}(x)} + \abs{f_{\cU}(x) - f_{\cU}(y)} + \abs{f_{n}(y)-f_{\cU}(y)} \leq \varepsilon,
\end{equation*}
therefore $\{n\in\N:\norm{(f_{n} - f_{\cU})g}<\frac{\varepsilon}{3}\}$ is a subset of $\{ n\in\N: \text { for all } x,y\in B_{o}(r) \text{ with } d(x,y)<\delta \implies \abs{f_{n}(x)-f_{n}(y)}<\varepsilon\}$. This  shows that the last set belongs to $\cU$.
\end{proof}

\section{An application to boundary amenability of groups}\label{sec4}

Let $\Gamma$ be a countable group. Let $\cA$ be a unital $C^{\ast}$-algebra endowed with a $\Gamma$-action by $\ast$-automorphisms. 
Let $C_{c}(\Gamma,\cA)$ be the space of finitely supported functions from $\Gamma$ to $\cA$.
This is a $\ast$-algebra with the product given by
\begin{equation*}
T\ast S (\gamma) = \sum_{\gamma_{1}\gamma_{2}=\gamma}{T(\gamma_{1})(\gamma_{1}\cdot S(\gamma_{2}))}
\end{equation*}
and the involution given by
\begin{equation*}
T^{*} (\gamma) = \gamma \cdot T(\gamma^{-1})^{*}.
\end{equation*}
The space $C_{c}(\Gamma,\cA)$ has a pre-Hilbert $\cA$-module structure via the inner product $\langle T,S\rangle_{\cA} := \sum\limits_{\gamma\in\Gamma}{T(\gamma)^{*}S(\gamma)}$, and the corresponding norm $\norm{T}_{\cA} := \langle T,T \rangle^{1/2}$.

\begin{definition}\label{exactitud}
A group $\Gamma$ is boundary amenable (or exact) if there exist a compact space $Y$ and a sequence $(S_{i})_{i\in\N}\subset C_{c}(\Gamma,C(Y))$ such that 
\begin{enumerate}
\item\label{item1} $S_{i}$ is positive, that is, $S_{i}(\gamma)\geq 0$ for all $ \gamma\in\Gamma$;
\item\label{item2}  $\sum_{\gamma\in\Gamma}{S_{i}^{2}(\gamma)} = 1$;
\item\label{item3}  for every $\gamma_{1}\in\Gamma$, it follows that $\norm{S_{i} - \gamma_{1}\ast S_{i}}$ goes to $0$ when $i$ goes to infinity.
\end{enumerate}
\end{definition}

In \cite[Section 4]{2016arXiv161009276A}, the authors applied their ultraproduct construction of the multiplier algebra of a separable $C^{\ast}$ algebra to show that groups acting properly and transitively on a locally finite tree are boundary amenable. The assumption that the action is proper and transitive implies that the vertex stabilizers are all isomorphic finite groups of cardinal $m$. In the proof given in \cite{2016arXiv161009276A}, this constant $m$ contains all the information that is needed about the stabilizers. 
 Since it is known that groups acting on a locally finite tree with exact stabilizers are boundary amenable (see, for instance,\cite{MR1978888,MR2391387,MR2243738,MR1871980}), it is interesting to see if the strategy developed in \cite{2016arXiv161009276A} can be extended to prove this more general result. We partially achieve this in the proof of the next theorem.
\begin{theorem}\label{teorema}
If a  countable group $\Gamma$  acts transitively on a locally finite tree $\cT$ with boundary amenable  stabilizers, 
 then $\Gamma$ is boundary amenable.
\end{theorem}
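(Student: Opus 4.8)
The plan is to build the sequence $(S_i)_{i\in\N}$ required by Definition \ref{exactitud} directly on the tree, using the vertex stabilizers as ``building blocks'' and then averaging out their non-amenability with the approximate-unit/ultraproduct machinery of Section \ref{sec2}. Fix a base vertex $o\in\cT$. Since $\Gamma$ acts transitively, every vertex is $\gamma\cdot o$ for some $\gamma$, and the stabilizer $\Gamma_o$ is the archetypal stabilizer; call it $H$. By hypothesis $H$ is boundary amenable, so there are a compact space $Y$ and maps $(R_k)_{k\in\N}\subset C_c(H,C(Y))$ satisfying the three conditions of Definition \ref{exactitud} for the group $H$. First I would choose the compact space for $\Gamma$ to be $Y$ itself (or a product of finitely many copies, or $Y$ together with an auxiliary compact space carrying the combinatorial data of balls in $\cT$); the point is that $C(Y)$ is a unital $C^\ast$-algebra on which $\Gamma$ should be made to act. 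Here one needs to be careful: $H$ acts on $Y$ but $\Gamma$ does not, a priori. The fix is the standard induction trick — set $\cA := C(Y)\otimes \ell^\infty(\Gamma/H)$ or, more economically, work with functions on the tree valued in $C(Y)$ and let $\Gamma$ permute vertices while $H$-stabilizers act on the $C(Y)$-fibre; this is exactly the situation where the ``unital $C^\ast$-algebra with $\Gamma$-action'' of the setup of Section \ref{sec4} is needed, rather than plain $C(Y)$.

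Next I would import the combinatorial core of the Avsec--Goldbring argument. Their proof for proper transitive actions produces, for each $i$, a positive normalized element of $C_c(\Gamma,C(Y))$ by pushing a normalized ``bump'' along geodesics in $\cT$ toward a point and dividing by $\sqrt m$ at each branching to keep the $\ell^2$-norm equal to $1$; the constant $m$ appeared only because every stabilizer had exactly $m$ elements and the counting measure on $\Gamma$ restricted to a fibre was uniform of mass $m$. In the present generality the fibres $\Gamma/\Gamma_v$ are still finite (local finiteness of $\cT$ plus transitivity gives $[\Gamma:\Gamma_v]$ equal to the number of vertices at bounded distance, which is finite on each ball — more precisely, each $\Gamma_v$ has finite index in any $\Gamma_w$ with $w$ a neighbour of $v$, with index the degree minus adjustments). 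So I would replace the scalar normalization by a $C(Y)$-valued one built from the $R_k$'s: at a vertex $v$ adjacent to $w$ along the geodesic to the chosen endpoint, multiply by the appropriate coset-indexed family coming from $R_k$ applied inside $\Gamma_v$, so that the fibrewise sum of squares, which in the scalar case was $1/m$ summed $m$ times, becomes $\sum_{h}R_k^2(h)=1$ by condition \ref{item2} for $H$. The almost-invariance condition \ref{item3} for $\Gamma$ then splits into two pieces: the ``geometric'' piece, controlled exactly as in \cite{2016arXiv161009276A} by the fact that a group element moves the geodesic to a fixed endpoint by a bounded amount (so the bumps overlap in all but finitely many coordinates), and the ``stabilizer'' piece, controlled by condition \ref{item3} for $H$. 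A diagonal argument over $i$ and $k$ then finishes: choose $k=k(i)\to\infty$ fast enough.

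The ultrapower material of Section \ref{sec2} enters exactly where it did for Avsec--Goldbring: to make sense of ``limits of bumps'' as honest elements of a multiplier algebra, and in particular to realize the compact space $Y$ (resp. the relevant $C^\ast$-algebra) as the spectrum of an algebra built as $\cA^{s\cU}/J$, so that the $\Gamma$-action on the ideal extends canonically to the multiplier algebra by Theorem \ref{main theorem} and Corollary \ref{corolary}. Concretely, I expect to apply Theorem \ref{main theorem} to the separable $C^\ast$-algebra $C_0(\cT^{(0)})\otimes C(Y)$ (vertices with the discrete, hence proper, metric), obtain its multiplier algebra as the quotient of the $\cU$-strict-convergent sequences, and use the functoriality square of the last Proposition of Section \ref{sec2} to transport the $\Gamma$-action and check that the constructed $S_i$ live in $C_c(\Gamma, C(Y'))$ for $Y'$ the Gelfand spectrum of the unitization. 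The main obstacle, I believe, is precisely the bookkeeping around the stabilizers: in the Avsec--Goldbring setting the stabilizer was finite and central to a single constant, whereas here each vertex stabilizer is an infinite (merely exact) group and, worse, the stabilizers of adjacent vertices are different finite-index overgroups of their intersection, so the ``amenable approximants'' $R_k$ must be chosen compatibly along edges (one genuinely needs the approximants of $H$ and of its conjugates/overgroups to match up on intersections, up to small error). Handling this coherence — essentially a cocycle-type compatibility of the $R_k$'s along the edge set of $\cT$ — is where the real work lies, and it is the reason the theorem is only ``partially'' an extension of \cite{2016arXiv161009276A}; I would address it by fixing an orientation of $\cT$ toward the chosen endpoint and only ever comparing a stabilizer with the stabilizer of its unique forward neighbour, which reduces the infinitely many compatibility conditions to a tree of binary ones that can be resolved by a single diagonalization.
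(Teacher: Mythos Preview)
Your outline has the right skeleton---use the stabilizer's exactness, average along geodesics in $\cT$, pass to the corona of $C_0(\cT\times Y)$ via the ultrapower construction, and diagonalize---and this is precisely what the paper does. But you have misidentified where the difficulties are, and as written your plan would stall on problems that do not actually arise.

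First, the worry that ``$H$ acts on $Y$ but $\Gamma$ does not'' is dispatched by a known fact (see \cite[p.~178]{MR2391387}): one may always take the compact witness $Y$ for the exactness of $H$ to be a $\Gamma$-space. The paper simply invokes this. Your proposed induction via $C(Y)\otimes\ell^\infty(\Gamma/H)$ is unnecessary and would land you in a nonseparable, non-unital algebra where Definition~\ref{exactitud} does not directly apply.

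Second, and more seriously, the ``cocycle compatibility'' of the approximants $R_k$ along edges that you flag as the main obstacle is a phantom. The paper avoids it entirely by fixing once and for all a cross section $v\mapsto g_v\in\Gamma$ with $g_v\cdot o=v$, so that $\Gamma=\bigsqcup_v g_v\Lambda$. The candidate function is then
\[
T_i(\gamma)(t,y)=\frac{1}{\sqrt{|[o,t]\cap B(i)|}}\sum_{v\in[o,t]\cap B(i)} S_{\kappa(i)}(g_v^{-1}\gamma)(g_v^{-1}y),
\]
and the crucial point is that for each $\gamma$ exactly one $v$ makes $g_v^{-1}\gamma\in\Lambda$, so the sum over $\gamma$ of squares collapses termwise to $\sum_\lambda S_{\kappa(i)}^2(\lambda)=1$ with no cross terms. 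No comparison of approximants for different stabilizers along an edge is ever needed; what \emph{is} needed is almost-invariance of $S_{\kappa(i)}$ under the finitely many elements $g_v^{-1}\gamma_1 g_w\in\Lambda$ that arise when checking condition~(3), and this is exactly arranged by the diagonal choice $\kappa(i)$ over the finite sets $\Lambda_i=\bigcup_{v,w\in B(i)}(g_v^{-1}\Gamma_i g_w\cap\Lambda)$.

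Finally, a small but important correction: the target unital commutative $C^\ast$-algebra is the \emph{corona} $\eme(\cA)/\cA\cong\cA^{s\cU}/J\big/\cA$, not the unitization of $\cA$ or the multiplier algebra itself. Passing to the corona is what lets the infimum over $a\in\cA$ in the norm computation absorb the finitely-supported error term in the estimate for condition~(3).
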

\begin{proof}
Fix a vertex $o\in\cT$ as a base-point.  We will denote with $B(i)$ the closed ball of radius $r$ centered in $o$.
The geodesic that connects $o$ with $t$ will be denoted by $[o,t]$. In what follows  $Stab\{o\}$ will be denoted with $\Lambda$.

Since the action of $\Gamma$ on $\cT$ is transitive, we choose a cross section for it,
 namely, for each $v\in \cT$ we choose $g_v\in\Gamma$ such that $g_{v}o=v$.

 Set  $\big(\Gamma_i\big)_{i\in\N}$ an increasing  sequence of finite subsets of $\Gamma$ that covers $\Gamma$ and let
$$\Lambda_{i} := \bigcup\limits_{v\in B(i)}\bigcup\limits_{w\in B(i)}{\left(g_{v}^{-1}\Gamma_{i} g_{w}\cap\Lambda\right)}.$$ 
Since $\cT$ is locally finite,  $\Lambda_{i}$ is  finite.
   
Since $\Lambda$ is boundary amenable, there exist a compact set $Y$
 and a sequence
$(S_i)_{i\in\N}\subset C_c((\Lambda,C(Y))$ satisfying the three conditions of Definition \ref{exactitud}.
The compact space $Y$ will be taken to be a $\Gamma$-space (see \cite[pg. 178]{MR2391387}).
Moreover, we can assume that $S_{i}$ is defined over all $\Gamma$
 by extending it by $0$ in $\Gamma\setminus\Lambda$.
 For every $i\in\mathbb{N}$, let 
\begin{equation*}
\kappa(i):=\min\{k\in\N: ||S_{l}-\lambda\ast S_{l}\|<\frac{1}{i} \text { for all } l\geq k \text{ and for all }\lambda \in \Lambda_{i}\}.
\end{equation*}
Consider the $C^{\ast}$-algebra $\cA:=C_{0}(\cT\times Y)$. Since $Y$ is compact and $\cT$ is countable,
 $\cA$ is a $\sigma$-unital $C^{\ast}$-algebra, hence we are in the case discussed at the beginning of Section \ref{sec3}. 
 Let $\cU$ be a non principal ultrafilter over $\N$ and consider the corona algebra
 $\eme(\cA)/\cA$, which we identify with $\cA^{s\cU}/J\big/\cA$. This is a unital $C^{\ast}$-algebra.

For each $i\in\N$, set $T_{i}:\Gamma\rightarrow \cA^{s\cU}$, 
where $T_{i}(\gamma)=(T_{i}^{n}(\gamma))_{n\in\N}$ 
is defined by
\begin{equation*}\label{folnerseq}
T_{i}^{n}(\gamma)(t,y) = \frac{1}{\sqrt{\Abs{[o,t]\cap B(i)}}}\chi_{B(n)}(t)\sum_{v\in B(i)}{\chi_{[o,t]}(v)S_{\kappa(i)}(g_{v}^{-1}\gamma)(g_{v}^{-1} y) },
\end{equation*}
where $\Abs{[o,t]\cap B(i)}$ denotes the number of points in the geodesic 
$[o,t]$ that lie inside the ball $B(i)$.
Note that $(T_{i}^{n}(\gamma))_{n\in\N}$ is $\cU$-strict convergent to
 $$T_{i} = T_{i}(\gamma)(t,y) = \frac{1}{\sqrt{\Abs{[o,t]\cap B(i)}}}\sum_{v\in B(i)}{\chi_{[o,t]}(v)S_{\kappa(i)}(g_{v}^{-1}\gamma)(g_{v}^{-1} y) }$$
We will  denote by $T _{i}$ its projection  to $\cA^{s\cU}/J\big/\cA.$ We remark that the definition of $T_i$ is inspired by the definition  of 
$\mu_{x,y}$ given by Ozawa in \cite{MR2243738}.\\
\noindent
 {\bf Claim:} The sequence $(T_{i})_{i\in\N}\subseteq C(\Gamma;\cA^{s\cU}/J\big/\cA)$ satisfies the conditions of Definition \ref{exactitud}.\\
 To show this, first observe that if $\Omega_{\kappa(i)}\subseteq \Lambda$ denotes the support of $S_{\kappa(i)}$, 
 then the support of $T_{i}$ is contained in 
$\bigcup_{v\in B(i)}g_{v}\Omega_{\kappa(i)}$.
Since $\cT$ is locally finite, this is a finite set.\\
To show that condition (\ref{item1}) holds true, it is enough to notice that $T_{i}$  is sum and product of positive functions. \\
To show that condition (\ref{item2}) holds true, note that for each fixed $\gamma\in\Gamma$, there exists only one $g_v$ such that $g_{v}^{-1}\gamma\in\Lambda$. 
Then there exists at most one $g_{v}$ such that $S_{\kappa(i)}(g_{v}^{-1}\gamma)$ is nonzero. 
Then 
\begin{align*}
\sum_{\gamma\in\Gamma}(T_{i})^{2}(t,y) &= 
\sum_{\gamma\in\Gamma}
\frac{1}{\Abs{[o,t]\cap B(i)}}\sum_{v\in B(i)}{\chi_{[o,t]}(v)S_{\kappa(i)}^{2}(g_{v}^{-1}\gamma)(g_{v}^{-1} y) }\\
&=\frac{1}{\Abs{[o,t]\cap B(i)}}\sum_{v\in B(i)}\chi_{[o,t]}(v)\sum_{\gamma\in\Gamma}{S_{\kappa(i)}^{2}(g_{v}^{-1}\gamma)(g_{v}^{-1} y) } \\
&=\frac{1}{\Abs{[o,t]\cap B(i)}}\sum_{v\in B(i)}\chi_{[o,t]}(v)\sum_{\lambda\in\Lambda}{S_{\kappa(i)}^{2}(\lambda)(g_{v}^{-1} y) } \\
&=\frac{1}{\Abs{[o,t]\cap B(i)}}\sum_{v\in B(i)}\chi_{[o,t]}(v) = 1.
\end{align*}

It remains to show that condition (\ref{item3}) holds true. To this end, fix $\gamma_{1}\in\Gamma$.   
Observe that if $(t,y)\in \cT\times Y$, then 
$\gamma_{1}\ast T_{i}(\gamma)(t,y) = T_{i}(\gamma_{1}^{-1}\gamma)(\gamma_{1}^{-1}t,\gamma_{1}^{-1}y)$. 
Hence 
\begin{align}
\label{calculo de norma}
\norm{T_{i} - \gamma_{1}\ast T_{i}}_{C_{c}(\Gamma,\cA^{s\cU}/J/\cA)}^{2} 
&= 
\norm{\sum\limits_{\gamma\in\Gamma}{\left(T_{i}(\gamma) - \gamma_{1}\ast T_{i}(\gamma) \right)^{2}}}_{\cA^{s\cU}/J\big/\cA}
\nonumber \\
&=
\inf_{a\in\cA}{\|\sum\limits_{\gamma\in\Gamma}{\left(T_{i}(\gamma) - \gamma_{1}\ast T_{i}(\gamma) \right)^{2} -a\|}_{\cA^{s\cU}/J}}
\nonumber\\
&=
\inf_{a\in\cA}{\|2- 2\sum_{\gamma\in\Gamma}{T_{i}(\gamma)(\gamma_{1}\ast T_{i}(\gamma))- a\|}_{\cA^{s\cU}/J}}
 \end{align}

If we set $\theta(i,t):=\Abs{[o,t]\cap B(i)}^{-1/2}\,\Abs{[o,\gamma_{1}^{-1}t]\cap B(i)}^{-1/2}$,
then 
\begin{align}\label{suma larga}
&\sum_{\gamma\in\Gamma}{T_{i}(\gamma)(t,y)T_{i}(\gamma_{1}^{-1}\gamma)(\gamma_{1}^{-1}t,\gamma_{1}^{-1}y)} 
\nonumber\\
&=\theta(i,t)\sum_{\substack{v\in B(i)\\ w\in B(i)}}{\chi_{[o,t]}(v)\chi_{[o,\gamma_{1}^{-1}t]}(w)} 
\sum_{\gamma\in\Gamma}{S_{\kappa(i)}(g_{v}^{-1}\gamma)(g_{v}^{-1} y)
S_{\kappa(i)}(g_{w}^{-1}\gamma_{1}^{-1}\gamma)(g_{w}^{-1}\gamma_{1}^{-1} y)}
\nonumber\\
&=\theta(i,t)\sum_{\substack{v\in B(i)\\ w\in B(i)}}{\chi_{[o,t]}(v)\chi_{[o,\gamma_{1}^{-1}t]}(w)} 
\sum_{\lambda\in\Lambda}{S_{\kappa(i)}(\lambda)(z)S_{\kappa(i)}(g_{w}^{-1}
\gamma_{1}^{-1}g_{v}\lambda)(g_{w}^{-1}\gamma_{1}^{-1}g_{v} z)},
\end{align}
where the last equality follows from the changes of variables 
$\lambda:=g_v^{-1}\gamma$ and $z:= g_{v}^{-1} y$, 
and by recalling that $S_{\kappa(i)}(\lambda) = 0$ if $\lambda \notin \Lambda$.

In order to get that 
$S_{\kappa(i)}(g_{w}^{-1}\gamma_{1}^{-1}g_{v}\lambda) \neq 0$, 
it is necessary that 
$g_{w}^{-1}\gamma_{1}^{-1}g_{v}$
belongs to $\Lambda$. 
Note that for a fixed $v$, there exists only one $g_w$ such that 
$g_{w}^{-1}\gamma_{1}^{-1}g_{v}\in\Lambda$. For this $g_w$,
 we have that $w= \gamma_1^{-1}v$ and 
 $\chi_{[o,\gamma_{1}^{-1}t]}(w)=\chi_{[o,\gamma_{1}^{-1}t]}
  (\gamma_{1}^{-1}v) = \chi_{[\gamma_{1}o,t]}(v)$.
Therefore, \eqref{suma larga} is equal to the following sum, only depending on  $v$.
\begin{equation*}
\theta(i,t)\sum_{v\in B(i)\cap \gamma_{1}B(i)}{\chi_{[o,t]}(v)\chi_{[\gamma_{1}o,t]}(v)} \sum_{\lambda\in\Lambda}{S_{\kappa(i)}(\lambda)(z)S_{\kappa(i)}(g_{w}^{-1}\gamma_{1}^{-1}g_{v}\lambda)(g_{w}^{-1}\gamma_{1}^{-1}g_{v} z)}.
\end{equation*}
Replacing this in  \eqref{calculo de norma}, we get that
$\norm{T_{i} - \gamma_{1}\ast T_{i}}_{C_{c}(\Gamma,\cA^{s\cU}/J/\cA)}^{2} $
is equal to
\begin{multline}
\label{igualdad 2}
\inf_{a\in\cA}\sup_{(t,y)\in T\times Y}
\Bigg\{\Abs{2-a(t,y)-\\
2\theta(i,t)\sum_{v\in B(i)\cap \gamma_{1}B(i)}{\chi_{[o,t]}(v)\chi_{[\gamma_{1}o,t]}(v)} \sum_{\lambda\in\Lambda}{S_{\kappa(i)}(\lambda)(z)S_{\kappa(i)}(g_{w}^{-1}\gamma_{1}^{-1}g_{v}\lambda)(g_{w}^{-1}\gamma_{1}^{-1}g_{v} z)}}\Bigg\}.
\end{multline}
By the triangle inequality, to estimate \eqref{igualdad 2}, it is enough to estimate the following two quantities
\begin{equation}
2\theta(i,t)\sum_{v\in B(i)\cap \gamma_{1} B(i)}{\chi_{[o,t]}(v)\chi_{[\gamma_{1}o,t]}(v)
\Abs{ 1 -\sum_{\lambda\in\Lambda}{S_{\kappa(i)}(\lambda)(z)S_{\kappa(i)}(g_{w}^{-1}
\gamma_{1}^{-1}g_{v} \lambda)(g_{w}^{-1}\gamma_{1}^{-1}g_{v} z)}}}
\label{eq1}
\end{equation}
and
\begin{equation}
\Abs{2 -2\theta(i,t)\sum_{v\in B(i)\cap \gamma_{1} B(i)}{\chi_{[o,t]}(v)\chi_{[\gamma_{1}o,t]}(v)-a(t,y)}}\label{eq2}
\end{equation}

In order to bound \eqref{eq1}, first note that 
$$2\Abs{1 -\sum_{\lambda\in\Lambda}{S_{\kappa(i)}
(\lambda)(z)S_{\kappa(i)}(g_{w}^{-1}\gamma_{1}^{-1}g_{v} \lambda)(g_{w}^{-1}\gamma_{1}^{-1}g_{v} z)} }
 = \norm{S_{\kappa(i)} - (g_{v}^{-1}\gamma_{1}g_{w})\ast S_{\kappa(i)}}^{2}.$$
Then note that there exists $r\in\N$ such that $\gamma_1\in\Gamma_i$ for all $i\geq r$.
It follows that for all $i\geq r$, $g_v^{-1}\gamma_1g_w\in \Lambda_i$, whenever $v,w\in B_{i}$. 
Then, by the definition of $\kappa(i)$, it follows that
\begin{equation*}
\norm{S_{\kappa(i)} - (g_{v}^{-1}\gamma_{1}g_{w})\ast S_{\kappa(i)}}^{2} \leq \frac{1}{i}, \text{ for all } i\geq r ,
 \text{ whenever } v\in B(i)\cap \gamma_{1} B(i). 
\end{equation*}
Then, for all $i\geq r$,  \eqref{eq1} is  bounded by 
\begin{align*}
 \frac{1}{i} \theta(i,t)  &\sum_{v\in B(i)\cap \gamma_{1} B(i)}{\chi_{[o,t]}(v)\chi_{[\gamma_{1}o,t]}(v)} \\
 &\leq  \frac{1}{i} \theta(i,t) \Big(\sum_{v\in B(i)\cap \gamma_{1} B(i)}{\chi_{[o,t]}(v)}\Big)^{1/2}
 \Big(\sum_{v\in B(i)\cap \gamma_{1} B(i)}\chi_{[\gamma_{1}o,t]}(v)\Big)^{1/2} \\
 &\leq  \frac{1}{i}\theta(i,t) \Abs{[o,t]\cap B(i)}^{1/2}\Abs{[\gamma_1o,t]\cap \gamma_1B(i)}^{1/2}\\
 &=  \frac{1}{i}\theta(i,t) \Abs{[o,t]\cap B(i)}^{1/2}\Abs{[o,\gamma_1^{-1}t]\cap B(i)}^{1/2}\\
 &=   \frac{1}{i} .
\end{align*}
In order to bound \eqref{eq2}, for each $i\in\mathbb{N}$,  we choose
$$a(t,y)=a(t):=
\left(2 -2\theta(i,t)\sum_{v\in B(i)\cap \gamma_{1} B(i)}
{\chi_{[o,t]}(v)\chi_{[\gamma_{1}o,t]}(v)}\right) \chi_{B(i)\cup\gamma_1B(i)}(t).$$

This choice of $a\in\cA$ left us to bound \eqref{eq2} only when $t\notin B(i)\cup\gamma_1B(i)$. In this case
$\theta(i,t)=\frac{1}{i}$. 
 Moreover, if $i\geq d(o,\gamma_1o)$ 
 then $$\Abs{[o,t]\cap[\gamma_1o,t]\cap B(i)\cap \gamma_1B(i)}\geq i-d(o,\gamma_1o).$$
It follows that in \eqref{eq2}  we have the bound 
\begin{align*}
2-\frac{2}{i}\sum_{v\in B(i)\cap \gamma_{1} B(i)}{\chi_{[o,t]}(v)\chi_{[\gamma_{1}o,t]}(v)}
&=2-\frac{2}{i}\Abs{[o,t]\cap[\gamma_1o,t]\cap B(i)\cap \gamma_1B(i)}\\
&\leq 2-\frac{2}{i}(i-d(o,\gamma_1o))
\\
&
=\frac{d(o,\gamma_1o)}{i}.
\end{align*}
All this combined entails that if
 $\varepsilon>0$,
 and $i\geq\max\{r;d(o,\gamma_1o),2/\varepsilon; 2d(o,\gamma_1o)/\varepsilon\}$, then
$\norm{T_{i} - \gamma_{1}\ast T_{i}}_{C_{c}(\Gamma,\cA^{s\cU}/J/\cA)}^{2} <\varepsilon$.
\end{proof}

\textbf{Acknowledgments.} F. Poggi was supported in part by a CONICET Doctoral Fellowship. R. Sasyk was supported in part through the grant PIP-CONICET 11220130100073CO. We thank Prof. Isaac Goldbring for his comments on the first version of the article that helped us improve the exposition.
We thank the referee for his detailed reading of the manuscript and for prompting us to add a section on boundary amenability of groups.

\bibliography{Bibliografia}

\def\cprime{$'$} \def\cprime{$'$}
\begin{thebibliography}{1}

\bibitem{2016arXiv161009276A}
Stephen {Avsec} and Isaac {Goldbring}.
\newblock {Boundary amenability of groups via ultrapowers}.
\newblock {\em To appear in Houston J. Math.}, arXiv:1610.09276.

\bibitem{MR1978888}
Gregory~C. Bell.
\newblock Property {A} for groups acting on metric spaces.
\newblock {\em Topology Appl.}, 130(3):239--251, 2003.

\bibitem{MR2391387}
Nathanial~P. Brown and Narutaka Ozawa.
\newblock {\em {$C^*$}-algebras and finite-dimensional approximations},
  volume~88 of {\em Graduate Studies in Mathematics}.
\newblock American Mathematical Society, Providence, RI, 2008.

\bibitem{MR0225175}
Robert~C. Busby.
\newblock Double centralizers and extensions of {$C^{\ast} $}-algebras.
\newblock {\em Trans. Amer. Math. Soc.}, 132:79--99, 1968.

\bibitem{MR2243738}
Narutaka Ozawa.
\newblock Boundary amenability of relatively hyperbolic groups.
\newblock {\em Topology Appl.}, 153(14):2624--2630, 2006.

\bibitem{MR3839621}
Gert~K. Pedersen.
\newblock {\em {$C^*$}-algebras and their automorphism groups}.
\newblock Pure and Applied Mathematics (Amsterdam). Academic Press, London,
  2018.
\newblock Second edition of [ MR0548006], Edited and with a preface by S\o ren
  Eilers and Dorte Olesen.

\bibitem{MR1871980}
Jean-Louis Tu.
\newblock Remarks on {Y}u's ``property {A}'' for discrete metric spaces and
  groups.
\newblock {\em Bull. Soc. Math. France}, 129(1):115--139, 2001.

\bibitem{MR1563581}
Herbert~E. Vaughan.
\newblock On locally compact metrisable spaces.
\newblock {\em Bull. Amer. Math. Soc.}, 43(8):532--535, 1937.

\end{thebibliography}
	\bibliographystyle{plain}
\end{document}